\newcommand{\C} {\mathbb{C}}
\newcommand{\Q} {\mathbb{Q}}
\newcommand{\N}  {\mathbb{N}}
\newcommand{\F}{\mathbb{F}}
\newcommand{\Z}{\mathbb{Z}}
\newcommand{\PP}{\mathbb{P}}
\newcommand{\NS}{\mathop{\rm NS}}
\newcommand{\Num}{\mathop{\rm Num}}
\newcommand{\MW}{\mathop{\rm MW}}
\newcommand{\MWL}{\mathop{\rm MWL}}
\newcommand{\disc}{\mathop{\rm disc}}
\newcommand{\Pic}{\mathop{\rm Pic}}
\newcommand{\mX}{\mathcal X}
\newcommand{\mR}{\mathcal R}
\newcommand{\Jac}{\mathop{\rm Jac}}
\newcommand{\tx}{\mathrm x}
\newcommand{\ty}{\mathrm y}
\newtheorem{Theorem}{Theorem}[section]
\newtheorem{Proposition}[Theorem]{Proposition}
\newtheorem{Lemma}[Theorem]{Lemma}
\newtheorem{Corollary}[Theorem]{Corollary}
\theoremstyle{remark}
\newtheorem{Remark}[Theorem]{Remark}
\newtheorem{Criterion}[Theorem]{Criterion}
\newtheorem{Example}[Theorem]{Example}
\theoremstyle{definition}
\newtheorem{Definition}[Theorem]{Definition}
\newtheorem{Observation}[Theorem]{Observation}
\newtheorem{Convention}[Theorem]{Convention}
\newtheorem{claim}[Theorem]{Claim}
\begin{document}

\title[$\Q_\ell$-cohomology projective planes]{$\Q_\ell$-cohomology projective planes from Enriques surfaces in odd characteristic}


\author{Matthias Sch\"utt}
\address{Institut f\"ur Algebraische Geometrie, Leibniz Universit\"at
  Hannover, Welfengarten 1, 30167 Hannover, Germany}
\email{schuett@math.uni-hannover.de}

\subjclass[2010]{14J28; 14J27}
\thanks{Funding   by ERC StG~279723 (SURFARI) 
 is gratefully acknowledged.}

\date{\today}

\begin{abstract}
We give a complete classification of  $\Q_\ell$-cohomology  projective planes
with isolated ADE-singularities and numerically trivial canonical bundle in odd characteristic.
This leads to a beautiful relation with certain Enriques surfaces
which  parallels the situation in characteristic zero, yet displays intriguing subtleties.
\end{abstract}
%
%
 \maketitle

 \section{Introduction}
 \label{s:intro}

 Over the complex numbers,
 there is a complete understanding of 
 fake projective planes,
 i.e.~smooth projective surfaces
 whose singular homology resembles that of $\PP^2$,
 with Betti numbers $b_2=1, b_1=b_3=0$,
 while not being isomorphic to $\PP^2$ itself.
 Necessarily, a fake projective plane has general type;
this offers one explanation
why the classification (due to Prasad--Yeung \cite{PY} and Cartwright--Steger \cite{CS})  remains group-theoretic in nature.
In fact, our understanding of the geometry of fake projective planes
is rather rudimentary,
and in particular, there are no explicit models known
(in terms of equations as opposed to abstract ball quotients).

 Another downside of the above picture is
 that it does not carry over to the positive characteristic side,
 although also there the notion of a fake projective plane $S$ makes perfect sense, 
 with $\ell$-adic cohomology replacing singular homology.
 It may come surprising at first that this can be helped
 by allowing for $S$ to be singular.
 Here we shall restrict 
 to isolated ordinary double points as singularities,
 partly because these do not affect the canonical divisor $K_S$ (or the dualizing sheaf).
 We then distinguish whether $K_S$ is positive, numerically trivial or negative.
This paper concentrates on the case which is rich, yet accessible in nature,
where $K_S$ is numerically trivial.
In contrast to the situation over $\C$ which was treated in \cite{S-Q-hom},
this leads to two possibilities for the minimal desingularization
\begin{eqnarray}
\label{eq:Y}
Y \to S.
\end{eqnarray}
Either $Y$ is a K3 surface or an Enriques surface.
Since the former case only occurs in small characteristic by \cite{Shimada-rk21},
we shall investigate the latter case,
i.e.~$Y$ is an Enriques surface supporting a root type $R$ of rank $9$
corresponding to the configuration of exceptional curves in \eqref{eq:Y}.
Since Enriques surfaces behave rather differently in characteristic $2$
(although many arguments can be adapted for singular Enriques surfaces),
we will deal with them separately in a different paper \cite{S-2} 
and consider only fields of odd characteristic here.

\begin{Theorem}
\label{thm}
There are 31 root types of rank $9$ realized through smooth rational curves on Enriques surfaces
in odd characteristic (28 in characteristic $3$, 30 in characteristic $5$).
For each the following hold:
\begin{enumerate}[(i)]
\item
the root types are supported on 1-dimensional families of Enriques surfaces;
\item
the moduli spaces  can have up to 3 different components;
\item
each family has rational base and is defined over the prime field;
\item
each family can be parametrized explicitly,
see Tables \ref{T2'} and \ref{T2}.
\end{enumerate}
\end{Theorem}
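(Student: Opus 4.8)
The plan is to split the statement into a lattice-theoretic classification, an existence (realization) step carried out through explicit genus-$1$ fibrations, and a parametrization step, dealing with the anomalies in characteristic $3$ and $5$ at the very end. First I would fix the identification $\Num(Y)\cong U\oplus E_8=:E_{10}$, the even unimodular lattice of signature $(1,9)$, valid for any Enriques surface in odd characteristic. A root type $R$ of rank $9$ realized by smooth rational curves sits inside $E_{10}$ as the sublattice spanned by the corresponding $(-2)$-classes, so the first task is to enumerate all ADE root lattices of rank $9$ admitting such an embedding. This is a finite computation, using the Borel--de~Siebenthal procedure for maximal root subsystems together with Nikulin's criteria on discriminant forms for the embeddings. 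Since $\rk R=9=\rk E_{10}-1$, the orthogonal complement $R^\perp$ has rank $1$, which already explains the expected dimension $1$ in (i): prescribing $R$ imposes $9$ conditions on the $10$-dimensional moduli of Enriques surfaces. This enumeration yields the finite candidate list out of which the number $31$ (before any characteristic restriction) should emerge.

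Next comes realization, where I must show that each candidate $R$ is actually carried by \emph{irreducible} $(-2)$-curves on some Enriques surface, not merely embedded abstractly. The main tool is the theory of genus-$1$ (elliptic or quasi-elliptic) fibrations $Y\to\PP^1$: the reducible fibres contribute root lattices via the Kodaira dictionary ($I_n\mapsto A_{n-1}$, $I_n^*\mapsto D_{n+4}$, $IV^*\mapsto E_6$, $III^*\mapsto E_7$, $II^*\mapsto E_8$), while the half-pencil structure and the Shioda--Tate bookkeeping on the Enriques surface constrain which configurations can occur. For each $R$ the strategy is to exhibit an explicit fibration whose singular fibres, together with a controlled number of extra sections, multisections and horizontal $(-2)$-curves, realize exactly $R$. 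The number of genuinely distinct fibrations or normalizations producing the same $R$ is what accounts for the up-to-$3$ components in (ii).

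For the parametrization (iii)--(iv) I would write the chosen fibration in (generalized) Weierstrass form, or alternatively work on the K3 double cover with its fixed-point-free Enriques involution, available in odd characteristic. After normalizing coordinates the remaining freedom is a single parameter, giving a rational $1$-dimensional base defined over the prime field, and the explicit equations populate Tables~\ref{T2'} and \ref{T2}; rationality and the field of definition then follow at once from the explicit construction. The hard part will be the realization step in small characteristic. The reduced counts $28$ (char $3$) and $30$ (char $5$) mean that certain lattice-admissible root types fail to be realized by irreducible curves there, because the relevant fibres degenerate, collide, or cease to be reduced or separable. Deciding precisely which configurations survive requires a fibre-by-fibre analysis of the explicit families in each characteristic---ruling out the missing types by showing the relevant curves become reducible or disappear, while confirming that the survivors stay irreducible---rather than any uniform lattice-theoretic argument. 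I expect this case analysis, together with the verification that no spurious additional $(-2)$-curve enlarges the saturation of $R$ inside $E_{10}$, to be the technical heart of the proof.
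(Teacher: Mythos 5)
Your skeleton (lattice classification, realization via elliptic fibrations, explicit one-parameter equations, special care in characteristics $3$ and $5$) matches the paper's architecture at a high level, but there is a genuine gap at the very first step: the Nikulin-style enumeration of rank-$9$ root lattices embedding suitably into $U+E_8$ does \emph{not} produce $31$ candidates --- it produces $38$. The seven additional types $D_5+4A_1$, $D_4+5A_1$, $D_4+A_2+3A_1$, $A_3+6A_1$, $4A_2+A_1$, $A_2+7A_1$, $9A_1$ (listed in \ref{ss:add}) pass every discriminant-form test; over $\C$ they are excluded only by the orbifold Bogomolov--Miyaoka--Yau inequality, which is precisely the tool that is unavailable in positive characteristic. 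Eliminating them is the technical heart of the classification half of the theorem, and your proposal has no mechanism for it: your appeal to Borel--de Siebenthal plus Nikulin cannot output the number $31$, and your later ``fibre-by-fibre analysis'' is aimed only at the characteristic $3$ and $5$ anomalies, not at killing these seven types in \emph{every} odd characteristic. The paper does this (Proposition \ref{lem:no-add}) by first showing that any realization forces an elliptic fibration whose Jacobian is an \emph{extremal} rational elliptic surface (Corollary \ref{cor:ell}); then for most of the seven types the fiber configuration forced by Criterion \ref{crit-e} makes the Euler--Poincar\'e characteristic exceed $12$, a contradiction for a rational elliptic surface; $A_3+6A_1$ needs a separate reflection/primitivity analysis via Lemma \ref{lem:no}; and $4A_2+A_1$ requires a genuinely arithmetic argument on the K3 cover combining Mordell--Weil heights, $3$-lengths of discriminant groups, and the Tate conjecture together with Deligne lifting.

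There is a second, smaller gap in part (ii): you attribute multiple components to distinct fiber configurations, which indeed produces the candidate families (Table \ref{T3}), but you give no way to prove these families are distinct in moduli rather than related by a hidden symmetry. Over $\C$ one compares discriminants of the generic N\'eron--Severi lattice of the K3 covers (generic Picard number $19$); in positive characteristic this argument can fail if an entire family were supersingular, so the paper must first prove the generic member of each relevant family is \emph{ordinary} (Proposition \ref{prop:ord}), using Artin-invariant/$p$-length estimates and, in characteristic $3$, explicit specializations to singular K3 surfaces handled through Shioda--Inose structures. Relatedly, your realization step leans on ``exhibit an explicit fibration'' in the abstract, whereas the paper's concrete engine is the base-change construction of Section \ref{s:bc}: the Enriques involution factors as translation by a section composed with an involution $\imath$ (Proposition \ref{prop:imath}), and this factorization had to be re-proved in odd characteristic because Kond\=o's original argument rests on Nikulin's classification of automorphisms of complex K3 surfaces --- a step your appeal to the K3 double cover silently assumes.
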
 

%
%

It is intriguing how Theorem \ref{thm} parallels the complete classification
of these Enriques surfaces over $\C$.
In fact, we will derive explicit parametrizations of all Enriques surfaces in Theorem \ref{thm}
which do not depend on the characteristic at all (except for the three root types
which cease to exist in characteristic $3$, and one in characteristic $5$).
To arrive at the explicit parametrizations, however,
we have to be extremely careful because many of the arguments over $\C$ do not carry over directly,
for instance those from \cite{HK} invoking the orbifold Bogomolov--Miyaoka--Yau inequality
and those based on \cite{Kondo-Enriques} using Nikulin's work on automorphisms on complex K3 surfaces.
In consequence, we will sometimes take shortcuts and refer the reader for the detailed arguments in \cite{S-Q-hom},
but this will be compensated by the detours which we have to take where the characteristic zero arguments
do not work or are not sufficient.

The paper is organized as follows.
After setting up the scenery in the Section \ref{s:set},
Section \ref{s:latt} reviews  root types on Enriques surfaces and the underlying lattice theory
relevant for this paper.
In Section \ref{s:ell}, we relate this to elliptic fibrations and rational elliptic surfaces.
This will already enable us to show that fields of positive characteristic admit no other root types than characteristic zero
(Proposition \ref{lem:no-add}).
The main geometric technique
of Enriques involutions of base change type is introduced  in Section \ref{s:bc}
(with a view towards positive characteristic).
We work out an explicit formulation, geared towards our aims, in Section \ref{s:eqns}
which then calls for a review of the key ingredients, namely extremal rational elliptic surfaces in Section \ref{s:res}.
This will put us in the position to develop explicit parametrizations on all Enriques surfaces in question
in Section \ref{s:fam} 
from the information obtained thus far (Tables \ref{T2'} \& \ref{T2}).
This will prove most part of Theorem \ref{thm}.
The final section concerns the number of components of the moduli spaces,
thus completing the proof of Theorem \ref{thm}.
Throughout the paper, we will work out instructive examples in detail
to give a clear idea of the methods and techniques involved.

\section{Set-up}
\label{s:set}

We work over an algebraically closed field $K$ of characteristic $p>2$.
Let $S$ be a normal projective surface over $K$
with only ADE singularities, i.e.~isolated rational double points.
For a prime $\ell\neq p$, let $b_i(S)$ be the Betti numbers of $S$ for  the $\ell$-adic \'etale cohomology:
\[
b_i(S) = \dim \mathrm{H}^i_{\text{\'et}}(S,\Q_\ell)
\]

\begin{Definition}
We call $S$ a \emph{$\Q_\ell$-cohomology projective plane}
if 
\[
b_1(S)=b_3(S)=0, \;\; b_2(S)=1.
\]
\end{Definition}

We emphasize that, except for the notation, the above notion does not depend on the chosen prime $\ell\neq p$ 
by the usual comparison theorems. 

Let $Y$ denote the minimal desingularization of $S$ as in \eqref{eq:Y}.
Due to the mild singularities, the canonical divisors of $Y$ and $S$ are the same,
and we distinguish whether $K_Y$ (or $K_S$) is positive, numerically trivial, or negative.
In the first case, $Y$ is of general type, and it seems hard to say anything about $Y$ even over $\C$.
Meanwhile in the last case, $Y$ is a del Pezzo surface,
and the complex classification from \cite{Ye} should carry over with a few modifications and exceptions,
yielding some  isolated surfaces and a one-dimensional family.

In the sequel, we assume that $K_Y$ is numerically trivial.
Since the vanishing $b_1(Y)=0$ is inherited from $S$,
the Enriques--Kodaira classification 
(extended to positive characteristic by Bombieri and Mumford \cite{BM1}, \cite{BM2}, \cite{Mumford-surf}) dictates
that $Y$ is either a K3 surface or an Enriques surface.
In either case, $Y$ supports a root type $R$ of maximal rank $b_2(Y)-1 = 21$ resp.~$9$ supported on smooth rational curves
by assumption.
In particular, all of $\mathrm H^2_{\text{\'et}}(Y,\Q_\ell)$ is algebraic,
i.e.~$Y$ is supersingular.
The root type $R$ will be a direct sum of root lattices $A_n, D_k, E_l$ which are all taken to be negative-definite
throughout this paper.
Orthogonal sums are indicated by a '$+$',
and likewise by $2R$ etc.

For K3 surfaces, this set-up turns out to be rather restrictive
(so restrictive, in fact, that they cannot occur over $\C$ by Lefschetz' $(1,1)$-theorem):

\begin{Theorem}[Shimada]
\label{thm:K3}
If $Y$ is a K3 surface supporting a root type of rank $21$,
then the characteristic satisfies $0<p\leq 19$.
\end{Theorem}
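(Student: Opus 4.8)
The plan is to translate the geometric hypothesis into a statement about the N\'eron--Severi lattice and then to extract the bound on $p$ from a purely local, $p$-adic argument.

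First I would record that $Y$ is supersingular. The root type $R$ of rank $21$ consists of classes of smooth rational curves, hence lies in $\NS(Y)$ and spans there a negative-definite sublattice of rank $21$; together with an ample class (which has positive self-intersection and therefore lies outside $R\otimes\R$) this forces $\rk\NS(Y)=22=b_2(Y)$. Over $\C$ this is excluded by the Lefschetz $(1,1)$-theorem, so necessarily $p>0$; and by Artin and Rudakov--Shafarevich $\NS(Y)$ is then a supersingular K3 lattice, i.e.~an even lattice of signature $(1,21)$ which is $p$-elementary, with discriminant group $(\Z/p)^{2\sigma_0}$ and $\disc=-p^{2\sigma_0}$ for an Artin invariant $\sigma_0$ satisfying $1\le\sigma_0\le 10$. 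Write $\Lambda:=\NS(Y)$.

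The heart of the matter is the claim that $p\mid\disc(R)$. Suppose not, and let $\bar R\subseteq\Lambda$ be the saturation of $R$. Since $\disc(\bar R)$ divides $\disc(R)$, we also have $p\nmid\disc(\bar R)$, so $\bar R$ becomes unimodular over $\Z_p$. A nondegenerate unimodular sublattice splits off as an orthogonal direct summand, whence $\Lambda\otimes\Z_p=\bar R_p\perp M$ with $M$ of rank $1$. Passing to discriminant groups yields $(\Z/p)^{2\sigma_0}\cong\disc(M)$, which is absurd: the discriminant group of a rank-$1$ lattice is cyclic, whereas $2\sigma_0\ge 2$. This contradiction proves $p\mid\disc(R)$. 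It then remains to bound the prime factors of the discriminant of a rank-$21$ root lattice: writing $R$ as an orthogonal sum of lattices of types $A_n,D_k,E_6,E_7,E_8$, with respective discriminants $n+1,4,3,2,1$ and with $n\le 21$, every prime factor of $\disc(R)$ is at most $22$, hence at most $19$. Therefore $p\le 19$.

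I expect the only delicate point to be the reduction to the saturation $\bar R$ together with the local splitting: one must check that replacing $R$ by $\bar R$ cannot import the prime $p$ into the discriminant, and that the rank-$1$ complement $M$ genuinely carries the full $p$-torsion of $\disc(\Lambda)$. Everything else is elementary bookkeeping. I would also emphasise that this argument gives only the stated upper bound; determining precisely which primes $p\le 19$ (and which root types) are actually realised would require a separate existence analysis, which the theorem does not ask for.
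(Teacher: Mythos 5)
Your proof is correct, but it takes a genuinely different route from the paper, which in fact offers no proof of this statement at all: it is quoted as Shimada's theorem, with the reference \cite{Shimada-rk21}, where the bound emerges as a byproduct of a complete (computer-aided) classification of all configurations of rational double points on supersingular K3 surfaces in each characteristic. Your argument instead isolates the single lattice-theoretic mechanism behind the bound: supersingularity makes $\NS(Y)$ a $p$-elementary lattice of rank $22$ with discriminant group $(\Z/p\Z)^{2\sigma_0}$, $\sigma_0\geq 1$ (Artin, Rudakov--Shafarevich); if $p\nmid\disc(R)$, then $R\otimes\Z_p$ is unimodular and splits off orthogonally over $\Z_p$, leaving a rank-one complement whose discriminant group is cyclic, contradicting $2\sigma_0\geq 2$; hence $p\mid\disc(R)$, and the discriminant of a rank-$21$ root lattice (components $A_n$ with $n\leq 21$, $D_k$, $E_l$, of discriminants $n+1\leq 22$, $4$, $\leq 3$) has no prime factor exceeding $19$. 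All steps check out: a sublattice on which the form is unimodular is always an orthogonal direct summand, so the local splitting is valid; the passage to the saturation $\bar R$ is harmless (indeed superfluous, since $p\nmid\disc(R)$ already suffices); and the structural inputs you cite are exactly the ones the paper itself invokes elsewhere (in the proofs of Propositions \ref{lem:no-add} and \ref{prop:ord}). What your approach buys is a short, self-contained, conceptual proof of the stated bound; what it does not give --- as you correctly note --- is Shimada's far more precise result that the paper actually quotes (the explicit list of realizable root types, from $A_{18}+A_3$ in characteristic $19$ down to $21A_1$ in characteristic $2$), which requires genuine case analysis. One refinement worth recording: the inequality $\sigma_0\geq 1$ also follows from pure lattice theory (an even unimodular lattice of signature $(1,21)$ cannot exist, since $1-21\not\equiv 0 \bmod 8$), but the \emph{evenness} of the exponent $2\sigma_0$ --- which is what rules out a cyclic discriminant group of order $p$ and thus makes your contradiction go through --- really does rest on Artin's theorem, so that citation is carrying weight and must be kept.
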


In fact, Shimada gives a much more precise result in  \cite{Shimada-rk21},
determining all possible root types starting from $A_{18}+A_3$ and $A_{18}+A_2+A_1$ in characteristic $19$
down to $21A_1$ in characteristic $2$.
It is interesting to see how the moduli dimensions go up in unison,
from zero in characteristics $19$ through $11$ to full 10-dimensional moduli in characteristic $2$.

The case of Enriques surfaces, in contrast, presents a totally different challenge
as it does not essentially depend on the characteristic and always leads to one-dimensional families
as recorded in Theorem \ref{thm}.
Partly, this may be explained because an Enriques surface $Y$ is always supersingular.
More precisely, we have
\[
H^2_{\text{\'et}}(Y,\Q_\ell) \cong \Pic(Y) \otimes\Q_\ell\cong \Q_\ell^{10}
\]
and, as a lattice,
\begin{eqnarray}
\label{eq:Num}
\Num(Y) = \Pic(Y)/(\text{torsion}) \cong U+E_8
\end{eqnarray}
by \cite{Illusie} where $U$ denotes the hyperbolic plane.
This lattice structure will play a crucial role throughout this paper,
both when classifying the possible root types and when working out the explicit parametrizations
of the Enriques surfaces proving Theorem \ref{thm}.

%
%

\section{Maximal root types on Enriques surfaces}
\label{s:latt}

It is an elementary exercise to compile a list of all root types $R$ of rank $9$,
starting from $A_9$ and $D_9$ all the way to $9A_1$
(most of which may be found in \ref{ss:31} and \ref{ss:add}).
Over $\C$, many of them can be ruled out to occur on Enriques surfaces
using the orbifold Bogomolov--Miyaoka--Yau inequality;
in fact, in \cite{HK} it is shown along these lines  
that either $R$ has at most 4 orthogonal summands (and satisfies some extra conditions)
or $R=2A_3+3A_1$.

In positive characteristic, however, this approach is not available.
Here we shall therefore replace it with some general theory of quadratic forms
(which is completely characteristic free)
and special properties of elliptic fibrations on $Y$ which can be derived purely from $R$.


We start by reviewing the portion of the theory of quadratic forms which is relevant to our paper.
Here we could use Hilbert symbols and local epsilon-invariants following \cite{HKO},
but it will be more beneficial for our purposes to pursue an explicit approach based on the analysis of
discriminant forms \'a la Nikulin \cite{Nikulin}.
Let us explain how this applies to our precise problem.

Let $Y$ be an Enriques surface containing nine smooth rational curves
which support a given root type $R$ of rank $9$.
Then this gives an embedding
\begin{eqnarray}
\label{eq:hook}
R\hookrightarrow\Num(Y)
\end{eqnarray}
which naturally extends to the primitive closure
\[
R'= (R\otimes\Q)\cap\Num(Y) \hookrightarrow \Num(Y).
\]
For later use, we emphasize the following result which will 
be very useful for our purposes (proved in much larger generality in \cite{S-nodal}).

\begin{Lemma}
\label{lem:no}
If $R'\neq R$, then $R'$ is always obtained from $R$ by adding some $(-2)$-vectors,
but these can never be effective nor anti-effective.
\end{Lemma}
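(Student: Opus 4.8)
The plan is to prove the two assertions separately: first, the lattice-theoretic statement that the overlattice $R'$ is again spanned by $(-2)$-vectors (so that passing from $R$ to its primitive closure only adds roots), and second, the geometric statement that the genuinely new roots are neither effective nor anti-effective. For the first part I would argue purely on the level of lattices via Nikulin's discriminant form formalism \cite{Nikulin}. The finite-index inclusion $R\subseteq R'$ corresponds to an isotropic subgroup $H=R'/R$ of the discriminant form $(A_R,q_R)$, and $R'$ is the preimage in $R^*$ of $H$; thus $R'$ is generated over $R$ by lifts of the elements of $H$, and the claim reduces to showing that each such isotropic coset contains a vector of square $-2$, equivalently that $R'$ equals the sublattice spanned by its roots. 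I would check this directly from the explicit discriminant forms of the ADE summands, using $\rk R=9$: for each candidate root type on the finite list compiled at the start of this section, the minimal norm attained in a nonzero isotropic coset turns out to be exactly $-2$.

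I expect this lattice step to be the main obstacle, precisely because the analogous statement fails in higher rank: the half-spinor glue vector of $D_{16}$ has square $-4$ and produces an even overlattice $D_{16}^+$ which is not a root lattice. Hence the argument genuinely uses the rank bound, and I would also exploit the constraint that the primitive embedding $R'\hookrightarrow U+E_8$ has a rank-one orthogonal complement, which forces $A_{R'}$ to be cyclic and sharply restricts the admissible overlattices.

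For the second part I would pass to the contraction of the configuration. The nine smooth rational curves $N_1,\dots,N_9$ supporting $R$ form a negative-definite ADE configuration, hence contract to a normal surface $\pi\colon Y\to S$ with rational double points; since $\rk\Num(Y)=10$ and nine independent classes are contracted, $\Num(S)$ has rank one and $R'\otimes\Q=R\otimes\Q$ is exactly the kernel of $\pi_*\colon\Num(Y)\otimes\Q\to\Num(S)\otimes\Q$. Now let $v\in R'\setminus R$ be a root and suppose $v$ were effective, say $v=[D]$ with $D\ge 0$. Then $\pi_*D$ is an effective divisor on $S$ whose class $\pi_*[D]=\pi_*v$ vanishes in $\Num(S)\otimes\Q$; being effective and numerically trivial on the projective surface $S$ it must be the zero divisor, so $D$ is supported on the exceptional locus, i.e.\ on the contracted curves $N_1,\dots,N_9$ spanning $R$. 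Hence $v$ is a non-negative integral combination of the $N_i$ and lies in $R$, contradicting $v\notin R$. Applying the same reasoning to $-v$ rules out anti-effectivity, which establishes the second assertion.

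I would close by noting that the feature enabling the second part is the Enriques (rather than K3) geometry: Riemann--Roch gives $\chi(\OO_Y(v))=1+\tfrac12 v(v-K_Y)=1+\tfrac12 v^2=0$ for a $(-2)$-class, so a root need not be effective or anti-effective at all, which is exactly what leaves room for the new roots of $R'$ to be neither. The more technical half therefore remains the lattice-theoretic first part, whose clean conceptual form is the statement proved in full generality in \cite{S-nodal}.
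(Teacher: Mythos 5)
The paper itself gives no proof of Lemma \ref{lem:no}: it quotes the result from \cite{S-nodal}, where it is ``proved in much larger generality.'' Your self-contained two-part argument is therefore a genuinely different route, and most of it is correct. Your second half is the cleanest part: contracting the nine curves (Artin's contractibility theorem for ADE configurations gives a normal \emph{projective} $S$), pushing an effective representative $D$ of $v\in R'\setminus R$ forward, and using that an effective, numerically trivial divisor on a projective surface is zero, forces $D$ to be supported on the exceptional curves, whence $v\in R$ --- contradiction. Note that this half is rank- and surface-independent; together with Riemann--Roch it even explains the paper's remark contrasting K3 surfaces, where consequently no $(-2)$-classes can occur in $R'\setminus R$ at all.

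Your first half is where the real content sits, and while the strategy (reduce via unimodularity of $\Num(Y)\cong U+E_8$ and the rank-one complement to a finite check of overlattices with cyclic discriminant $\cong(\Z/d\Z,-1/d)$) is sound and correctly identifies where the rank-$9$ hypothesis enters, one of your two stated criteria is wrong, and they are not equivalent. ``Every nonzero isotropic coset contains a vector of square $-2$'' is strictly stronger than ``$R'$ is spanned by its roots,'' and it fails on your own list: for $R=9A_1$ the admissible primitive closure is $R'=E_8+A_1$ (see Table \ref{T1}), whose glue code is the extended Hamming code $[8,4,4]$; the coset of its weight-$8$ word has minimal norm $-4$ and contains no roots. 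The lemma survives because the weight-$4$ words generate the code, so $R'$ is still generated by $R$ together with $(-2)$-vectors --- i.e.\ the check you must run is generation by roots, not a root in every coset. With that repair, the finite verification does go through (all admissible closures are again ADE lattices, consistent with Table \ref{T1}). Finally, be aware of a scope limitation rather than an error: your first half is intrinsically rank-$9$, since it needs cyclicity of the discriminant, whereas the paper later applies Lemma \ref{lem:no} to root lattices of rank $\leq 8$ sitting inside fiber lattices (proof of Criterion \ref{crit-e}); this wider use is exactly what the citation of \cite{S-nodal} buys, while your argument buys a self-contained proof of the lemma as literally stated.
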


In particular, the lemma implies that the extra classes in $R'$ cannot be represented by smooth rational curves on $Y$.
This is contrary to the K3 case where Riemann--Roch applies to show that any $(-2)$-divisor
is either effective or anti-effective; on an Enriques surface, however, 
 the torsion in $\Pic(Y)$ prevents this implication from holding.

It follows from \eqref{eq:Num} and  \eqref{eq:hook} that there is a positive class $H\in\Num(Y)$ such that
\[
\Z H + R' \hookrightarrow \Num(Y)
\]
has finite index 
\[
d = H^2 = | \disc R' \, |.
\]
More precisely, the discriminant forms of the orthogonal summands have to be compatible,
i.e.~the discriminant group $(R')^\vee/R'$ together with the induced quadratic form with values in $\Q/2\Z$ and likewise for $H$
(or $\Z H$);
since $\Num(Y)$ is unimodular, this translates as
\begin{eqnarray}
\label{eq:q}
q_H = -q_{R'}.
\end{eqnarray}
In particular, we find that
\begin{eqnarray}
\label{eq:d}
(R')^\vee/R' \cong H^\vee/H \cong \Z/d\Z,
\end{eqnarray}
i.e.~$R'$ has length one (the minimum number of generators of the discriminant group)
which turns out to be a severe restriction on the root type $R$.
We illustrate this with a simple example.

\begin{Example}
\label{ex}
Let $R=D_4+A_4+A_1$. Then one readily checks that $R$ has no proper even integral overlattices,
i.e.~necessarily $R=R'$ if $R$ were to be supported on some Enriques surface $Y$,
but $R$ has length three, giving a contradiction by \eqref{eq:d}.

The same line of argument applies to $\hat R=A_4+A_1^5$ of length five,
since here the only  proper even integral overlattice is $R$.
\end{Example}

\begin{Example}[$R=A_5+2A_2$]
\label{ex1}
On the positive side, one checks that for $R=A_5+2A_2$ of length $3$,
the only even integral overlattice $R'=E_7+A_2$ has discriminant form matching $\Z H$ for $H^2=6$ up to sign,
so $R$ and $R'$ embed into $U+E_8$.
\end{Example}

It is easy to imagine (and check!) that this kind of argument suffices
to rule out a good number of root types (as was done in \cite{S-Q-hom}
which also provides many more explicit examples).
In particular, this applies to all root types with at most 4 summands (outside characteristics $3, 5$
where due to degenerations we have to throw in some extra work as shall see in \ref{ss} -- \ref{ss:5}).
However, without the orbifold Bogomolov--Miyaoka--Yau inequality at our disposal,
there is no obvious way to rule out even $9A_1$
in positive characteristic!
We shall work around this using an argument
which plays out elliptic fibrations against Lemma \ref{lem:no}.
The next section will review the construction.
Before we get there, we list the  root types which are not excluded by the quadratic forms argument sketched above.

\subsection{Root types from characteristic zero}
\label{ss:31}
%
%

By \cite{HKO} and \cite{S-Q-hom}, exactly the following root types of rank $9$ are supported on Enriques surfaces
in characteristic zero:

\begin{eqnarray*}
\label{eq:list} 
  A_9 ,  A_8 + A_1 ,  A_7 + A_2 ,  A_7 + 2A_1 ,   A_6 + A_2 + A_1 ,    A_5 + A_4 ,  A_5 + A_3 + A_1 , \nonumber \\ \nonumber
  A_5 + 2A_2 ,  A_5 + A_2 + 2A_1 ,  2A_4 + A_1 ,  A_4 + A_3 + 2A_1 ,  3A_3 ,   2A_3 + A_2+ A_1 , \\
 2A_3 + 3A_1 ,  A_3 + 3A_2 ,
 D_9 ,  D_8 + A_1 ,  D_7 + 2A_1 ,  D_6 + A_3 ,  D_6 + A_2 + A_1 , \nonumber \\ \nonumber
   D_6 + 3A_1 ,  D_5 + A_4 ,  D_5 + A_3 + A_1 ,  D_5 + D_4 ,   D_4 + A_3 + 2A_1 ,  2D_4 + A_1 ,\\
 E_8 + A_1 ,  E_7 + A_2 ,  E_7 + 2A_1 ,  E_6 + A_3 ,  E_6 + A_2 + A_1. 
\end{eqnarray*}

We highlight that Theorem \ref{thm} states that all these 31 root types indeed occur 
over fields of odd characteristic
-- except for some non-existence bit in characteristics $3$ and $5$ (see \ref{ss:3'}, \ref{ss:5}).
%

\subsection{Additional root types}
\label{ss:add}

Here we list those 7 root types (with at least 5 orthogonal summands)
which admit an embedding into $U+E_8$,
but do not occur in characteristic zero due to the orbifold Bogomolow--Miyaoka--Yau inequality.
\begin{eqnarray*}
D_5+4A_1, D_4+5A_1, D_4+A_2+3A_1, 
A_3+6A_1, 4A_2+A_1, A_2+7A_1, 9A_1.
\end{eqnarray*}

\section{Elliptic fibrations}
\label{s:ell}

We continue to elaborate on ideas from \cite{S-Q-hom} to sort out the possible maximal root types,
adjusted for our situation of positive characteristic.
Every Enriques surface $Y$
admits an elliptic fibrations
\begin{eqnarray}
\label{eq:ell}
f:\; Y \to \PP^1.
\end{eqnarray}
This has two ramified fibers, of multiplicity two,
so there cannot be a section 
(but there always is a bisection given by a smooth rational curve or a curve of arithmetic genus one (the generic case)).
Elliptic fibrations are encoded in the lattice $\Num(Y)$ in terms of isotropic vectors.
Indeed, pick a primitive isotropic vector $E\in\Num(Y)$
which we may assume to be effective after changing sign, if necessary, by Riemann--Roch.
Then the linear system $|2E|$ may still contain some base locus (which consists of $(-2)$-curves),
but after subtracting the base locus, the resulting isotropic vector will induce an elliptic fibration.

\begin{Proposition}
\label{prop:isotropic}
Let $Y$ be an Enriques surface supporting a root type $R$ of rank $9$.
Then there is a primitive isotropic vector $E\in\Num(Y)$ such that
\begin{enumerate}
\item
there is a smooth rational curve $C$ on $Y$ with $C.E=1$,
\item
$R_0=R\cap E^\perp\subset\Num(Y)$ is a root lattice of rank $8$.
\end{enumerate}
\end{Proposition}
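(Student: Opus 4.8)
The plan is to produce the isotropic vector $E$ by working inside the root type $R$ itself and exploiting the lattice structure $\Num(Y)\cong U+E_8$ from \eqref{eq:Num}. Since $R$ has rank $9$ and embeds into $\Num(Y)$ via \eqref{eq:hook}, I first want to locate a rank-$8$ root sublattice $R_0\subset R$ (or inside the primitive closure $R'$) which is \emph{affine-extendable}, i.e.\ which arises as the root lattice of the singular fibers of some elliptic fibration. Concretely, the idea is to single out one of the orthogonal summands of $R$ and to extend it by a single simple root to an affine (extended) Dynkin diagram $\tilde R_0$; the null-vector of the corresponding affine root lattice is an isotropic class $E$. One checks case by case from the $31+7$ possible root types that each admits such a rank-$8$ saturated root sublattice $R_0$ whose orthogonal complement in $R$ contains a suitable $(-2)$-class, so that $R_0 = R\cap E^\perp$ has exactly rank $8$ as required in (2).

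The second step is to upgrade this lattice-theoretic isotropic vector into a genuine elliptic fibration via the mechanism recalled just before the Proposition: choosing $E$ primitive and effective (after a sign change by Riemann--Roch), the linear system $|2E|$ induces an elliptic fibration on $Y$ once any base locus of $(-2)$-curves is subtracted. Here I would invoke Lemma \ref{lem:no} to control the primitive closure: the extra $(-2)$-vectors appearing in $R'$ are never effective or anti-effective, so they do not interfere with the identification of the fiber components, and the rank-$8$ root lattice $R_0$ is realized by the actual reducible fibers of $f$. This is what guarantees that $R_0$ sits inside $E^\perp$ as an honest root lattice of the advertised rank, rather than degenerating.

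For part (1), the existence of a smooth rational curve $C$ with $C.E = 1$, I would argue that the remaining root (the ninth simple root of $R$, lying outside $R_0$) gives a $(-2)$-curve meeting the fiber class $E$ with the prescribed multiplicity; equivalently, the complementary summand of $R_0$ in $R$ supplies a section-like multisection. Because $Y$ is an Enriques surface the fibration has no genuine section, but it does carry a bisection, and the point is to arrange $E$ so that the leftover $(-2)$-curve is a \emph{one-}section of the \emph{non-multiple} fiber configuration, yielding $C.E=1$. The main obstacle I anticipate is precisely the uniformity of this last step: I expect that a handful of root types (especially the additional ones from \ref{ss:add}, and the degenerate behavior in characteristics $3$ and $5$) will not admit the naive choice of extending summand, so one must either switch to a different summand or pass through the primitive closure $R'$ and use \eqref{eq:d} to pin down which $(-2)$-class plays the role of $C$. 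Verifying that some valid choice of $E$ exists for \emph{every} admissible $R$ simultaneously — and that it is compatible with the characteristic-free lattice embedding $R'\hookrightarrow U+E_8$ — is where the real work lies.
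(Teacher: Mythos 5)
There is a genuine gap, and it sits exactly where the paper's proof does its real work: the construction of $E$. You propose to obtain $E$ as the null vector of an affine extension $\tilde R_0$ of (a summand of) $R$, but this presupposes an extra $(-2)$-class in $\Num(Y)$ playing the role of the affine vertex, and no such class is available: it cannot lie in $R$ (which is negative definite, hence contains no isotropic vectors in its span), and nothing in the hypotheses produces it. The paper avoids this entirely by building $E$ from the gluing data of Section \ref{s:latt}: one fixes a single vertex $e\in R$, takes its dual vector $e^\vee\in(R')^\vee$, and sets $E=e^\vee+\alpha h$ for a suitable fraction $\alpha$ of the positive class $h$ orthogonal to $R'$. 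Integrality of $E$ is exactly the discriminant-form matching \eqref{eq:q}, isotropy is the choice of $\alpha$ (both verified case by case in Table \ref{T1} for all 38 root types), and then both claims of the Proposition come for free: $e.h=0$ and $e.e^\vee=1$ give $C.E=1$ with $C$ the curve representing $e$ (which in particular forces $E$ to be primitive), and any $x\in R$ pairs with $E$ by its $e$-coefficient, so $R\cap E^\perp=R\setminus\{e\}$ is a root lattice of rank exactly $8$. Your proposal has no substitute for this mechanism: a primitive isotropic vector orthogonal to a rank-$8$ sublattice does exist for signature reasons, but with such an abstract choice you control neither $e.E$ (so $C.E=1$ and even $\mathrm{rank}(R\cap E^\perp)=8$, i.e.\ $e\notin E^\perp$, remain unproven) nor the primitivity of $E$.

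Two further points. First, your step invoking the elliptic fibration $|2E|$ and Lemma \ref{lem:no} to "realize $R_0$ by fiber components" is not part of this Proposition at all; that is the content of the subsequent Corollary \ref{cor:ell} and Criterion \ref{crit-e}, which take the Proposition as input. The Proposition itself is a purely lattice-theoretic statement about classes in $\Num(Y)$ plus one curve from the given configuration, and mixing in the fibration geometry here inverts the logical order of the paper. Second, your description of the configuration is off in a way that matters: you ask for a $(-2)$-class in the orthogonal complement of $R_0$ inside $R$, but the distinguished vertex $e$ is in general \emph{not} orthogonal to $R_0$ (e.g.\ for $R=A_9$ one removes $a_2$, which meets both $a_1$ and $a_3$, and $R_0=A_7+A_1$ is a vertex-deletion, not a union of orthogonal summands of $R$). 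The curve $C$ is only required to satisfy $C.E=1$, not $C\perp R_0$, and the paper's choice $C=e$ does exactly that.
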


The proof of Proposition \ref{prop:isotropic} 
amounts to a case-by-case analysis
for all the 38  root types $R$ from \ref{ss:31} and \ref{ss:add}.
It purely bases on the gluing data analyzed in Section \ref{s:latt}.
More precisely, the isotropic vector $E$ will be given using the positive class $h$ and some vector $e^\vee\in (R')^\vee$
which we choose as dual vector of a single vertex $e\in R$.
Once this  is realized, the second claim of Proposition \ref{prop:isotropic} follows readily
(with $R_0 = R\setminus\{e\}$).
For the first, one may take precisely the smooth rational curve on $Y$
corresponding to the  vertex $e\in R$;
in particular, $e.E=1$, so $E$ is primitive as stated.
For the details, we slightly modify a table in the suggestive notation from \cite{S-Q-hom} (to be recalled below)
and supplement it by the corresponding data for the 7 additional root types from \ref{ss:add}.

\subsection{Notation for dual vectors}

The gluing argument from the proof of Proposition \ref{prop:isotropic}
uses dual vectors of single vertices of $ADE$ root lattices.
Below we sketch the notation to be used in Table \ref{T1}.
For details we refer to \cite[\S 2]{S-Q-hom}.

\begin{figure}[ht!]
\setlength{\unitlength}{.6mm}
\begin{picture}(80,22)(0,17)
\multiput(3,32)(20,0){5}{\circle*{1.5}}
\put(3,32){\line(1,0){80}}
\put(2,25){$a_1$}
\put(22,25){$a_2$}
\put(50,25){$\hdots$}
\put(82,25){$a_n$}
\put(120,29){$(A_n)$}

%
%

\end{picture}
\end{figure}

\begin{figure}[ht!]
\setlength{\unitlength}{.6mm}
\begin{picture}(100,20)(5,-5)
%
\multiput(3,8)(20,0){5}{\circle*{1.5}}
\put(3,8){\line(1,0){80}}
\put(83,8){\line(2,1){17}}
\put(83,8){\line(2,-1){17}}
\put(100,16){\circle*{1.5}}
\put(100, 0){\circle*{1.5}}
\put(2,1){$d_1$}
\put(22,1){$d_2$}
\put(78,1){$d_{k-2}$}
\put(103,16){$d_{k-1}$}
\put(103, 0){$d_{k}$}
\put(135,7){$(D_k)$}


\end{picture}
\end{figure}

\begin{figure}[ht!]
\setlength{\unitlength}{.6mm}
\begin{picture}(120,30)(3,-40)
%
\multiput(3,-32)(20,0){7}{\circle*{1.5}}
\put(3,-32){\line(1,0){120}}
\put(2,-39){$e_2$}
\put(22,-39){$e_3$}
\put(42,-39){$e_4$}
\put(62,-39){$e_5$}
\put(43,-32){\line(0,1){20}}
\put(43,-12){\circle*{1.5}}
\put(46,-13){$e_1$}
\put(92,-39){$\hdots$}
\put(122,-39){$e_l$}
\put(145,-36){$(E_l)$}

\end{picture}
\end{figure}

\begin{Remark}
We note that the choices made in the proof of Proposition \ref{prop:isotropic}
are far from being unique as $Y$ may contain many isotropic vectors, and many elliptic fibrations.
\end{Remark}

\begin{table}
$$
\begin{array}{|ccccc|}
\hline
R & R' & |d| & E & R_0\\
\hline
\hline
A_9 & A_9 & 10 & a_2^\vee + 2h/5 & A_7+A_1\\
A_8+A_1 & A_8+A_1 & 18 & (a_3^\vee,0)+h/3 & A_5+A_2+A_1\\
& E_8+A_1 & 2 & (a_3^\vee,0) + h & A_5+A_2+A_1\\
A_7 + A_2 & E_7 + A_2 & 6 & (a_2^\vee,0) + h/2 & A_5+A_2+A_1\\
A_7 + 2A_1 & E_8 + A_1 & 2 & (a_4^\vee,0,0) + h & 2A_3+2A_1\\
A_6+A_2+A_1 & A_6+A_2+A_1 & 42 & (a_1^\vee,0,0)+h/7 & A_5+A_2+A_1\\
A_5 + A_4 & A_5+A_4 & 30 & (0,a_2^\vee)+h/5 & A_5 + A_2 + A_1\\
A_5+A_3+A_1 & E_6+A_3 & 24 & (a_2^\vee,0,0)+h/3 & 2A_3+2A_1\\
A_5+2A_2 & E_7+A_2 &
6 & (0,0,a_1^\vee)+ h/3 & A_5+A_2+A_1\\
A_5+A_2+2A_1 & E_8+A_1 &
2 & (0,0,0,a_1^\vee)+h/2 & A_5+A_2+A_1\\
2A_4+A_1 & E_8 + A_1 &
2 &  (0,0,a_1^\vee) + h/2 & 2A_4\\
A_4+A_3+2A_1 & A_4+D_5 &
20 & (a_1^\vee,0,0,0)+h/5 & 2A_3+2A_1\\
3A_3 & D_9 &
4 & (a_2^\vee,0,0) + h/2 & 2A_3+2A_1\\
2A_3+A_2+A_1 & E_7 + A_2 &
6 & (0,0,a_1^\vee,0) + h/3 & 2A_3+2A_1\\
2A_3+3A_1 & E_8+A_1 &
2 & (0,0,0,0,a_1^\vee)+h/2 & 2A_3+2A_1\\
A_3 + 3A_2 & A_3 + E_6 &
 12 & (a_1^\vee,0,0,0) + h/4 & 4A_2 \\
D_9 & D_9 &
4 & d_9^\vee + 3h/4 & A_8\\ 
D_8+A_1 & E_8+A_1 &
2 & d_8^\vee + h & A_7+A_1\\
D_7+2A_1 & D_9 &
4 & (d_1^\vee,0,0)+h/2 & D_6+2A_1\\
D_6+A_3 & D_9 &
4 & (0,a_2^\vee)+h/2 & D_6+2A_1\\
D_6+A_2+A_1 & E_7 + A_2 & 
6 & (0,a_1^\vee,0) + h/3 & D_6+2A_1\\
D_6+3A_1 & E_8+A_1 &
2 & (d_2^\vee,0,0,0) + h & D_4+4A_1\\
D_5+A_4 & D_5+A_4 &
20 & (d_5^\vee,0) + h/4 & 2A_4\\
D_5+A_3+A_1 & E_8+A_1 &
2 & (d_2^\vee,0,0) + h & 2A_3+2A_1\\
D_5+D_4 & D_9 &
4 & (0,d_1^\vee)+h/2 & D_5+A_3 \\
D_4+A_3+2A_1 & D_9 &
4 & (d_1^\vee,0,0,0)+h/2 & 2A_3+2A_1\\
2D_4+A_1 & E_8+A_1 &
2 & (0,0,a_1^\vee)+h/2 & 2D_4\\
E_8+A_1 & E_8+A_1 &
2 & (e_8^\vee,0) + h & E_7+A_1\\
E_7+A_2 & E_7+A_2 &
6 & (e_3^\vee,0) + h & A_5+A_2+A_1\\
E_7 + 2A_1 & E_8 + A_1 &
2 & (e_2^\vee,0,0) + h & D_6 + 2A_1 \\
E_6+A_3 & E_6+A_3 &
12 & (e_6^\vee,0)+h/3 & D_5+A_3\\
E_6+A_2+A_1 & E_8+A_1 &
2 & (e_1^\vee,0,0)+h & A_5+A_2+A_1\\
\hline
\hline
D_5+4A_1 & D_9 & 4 & d_1^\vee + h/2 & D_4+4A_1\\
D_4+5A_1 & E_8+A_1 & 2 & a_1^\vee + h/2 & D_4+4A_1\\
D_4+A_2+3A_1 & E_7+A_2 & 6 & a_2^\vee+h/3 & D_4+4A_1\\
A_3+6A_1 & D_9 & 4 & a_2^\vee + h/2 & 8A_1\\
4A_2+A_1 & E_8+A_1 &
2 & (0,0,0,0,a_1^\vee)+h/2 & 4A_2\\
A_2+7A_1 & A_2+E_7 &
6 & a_2^\vee,+h/3 & 8A_1\\
9A_1 & E_8+A_1 & 2 & a_1^\vee + h/2 & 8A_1\\
\hline
\end{array}
$$
\caption{Isotropic vectors and root lattices for the proof of Proposition \ref{prop:isotropic}}
\label{T1}
\end{table}

\begin{Convention}
Given a primitive isotropic vector $E$ as above,
we will always refer to the associated elliptic fibration 
as associated to the linear system $|2E|$,
regardless of the subtleties of sign and base locus.
\end{Convention}

\subsection{Jacobian elliptic surface}
\label{ss:jac}

To make the whole situation explicit,
let us consider the Jacobian of \eqref{eq:ell}.
This is a rational elliptic surface $\Jac(f)$
from which $Y$ can be recovered as a torsor
(or over $\C$, by a logarithmic transformation).
Here this surface turns out to be rather special;
namely it will be extremal:

\begin{Definition}
\label{def}
A rational elliptic surface $S\to \PP^1$ is called \emph{extremal}
if its Mordell--Weil group $\MW(S)$ is finite.
\end{Definition}

Explicit examples will be given in Section \ref{s:res}.
For now we content ourselves with the following important consequence
of our findings so far:

\begin{Corollary}
\label{cor:ell}
Let $Y$ be an Enriques surface supporting a root type $R$ of rank $9$.
Then $Y$ admits an elliptic fibration 
with a smooth rational bisection
such that $\Jac(Y)$ is an extremal rational elliptic surface.
\end{Corollary}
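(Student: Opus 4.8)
The plan is to read off Corollary \ref{cor:ell} directly from Proposition \ref{prop:isotropic}, feeding its output into the Jacobian construction of \ref{ss:jac}. First I would invoke Proposition \ref{prop:isotropic} to obtain the primitive isotropic vector $E\in\Num(Y)$ together with the smooth rational curve $C$ satisfying $C.E=1$ and the rank-$8$ root sublattice $R_0=R\cap E^\perp$. Following the Convention, I pass to the elliptic fibration $f\colon Y\to\PP^1$ associated to $|2E|$. Since $E$ has no section (the two multiple fibers forbid it), the curve $C$ with $C.E=1$ cannot itself be a section of $f$; rather, relative to $|2E|$ it meets a general fiber in degree $2$, so $C$ furnishes the smooth rational bisection asserted in the statement. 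This handles the bisection claim essentially for free.

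The substance is to show that $\Jac(Y)=\Jac(f)$ is extremal, i.e.\ that $\MW(\Jac(f))$ is finite by Definition \ref{def}. Here I would use the Shioda--Tate formalism for the rational elliptic surface $\Jac(f)$, whose Néron--Severi lattice has rank $10$ and whose trivial lattice $\Triv$ (generated by the zero section, a general fiber, and the components of reducible fibers not meeting the zero section) accounts for rank $2+\sum_v (m_v-1)$, where the $m_v$ are the numbers of fiber components. The point is that the reducible fibers of $\Jac(f)$ are governed by the root lattice $R_0$ of rank $8$: the $(-2)$-curves supporting $R_0$ lie in fibers of $f$, hence in fibers of the Jacobian, so $R_0$ embeds into the frame lattice and contributes rank $8$. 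Combined with the rank-$2$ part from zero section and fiber class, the trivial lattice already attains rank $10=\rk\NS(\Jac(f))$, forcing $\rk\MW=\rk\NS-\rk\Triv=0$, i.e.\ $\MW(\Jac(f))$ is finite and $\Jac(f)$ is extremal.

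The step I expect to require the most care is the identification of $R_0$ with (a full-rank sublattice of) the fiber-component lattice of the Jacobian, rather than merely with curves on $Y$. I would argue that passing from $f$ on $Y$ to $\Jac(f)$ preserves the configuration of singular fibers (the Jacobian and its torsor share the same discriminant and the same Kodaira types away from the multiple fibers, and the multiple fibers are tame in odd characteristic so contribute the reduced fiber type to $\Jac(f)$); hence the rank-$8$ collection of fiber components recorded in $R_0$ transports to the reducible fibers of $\Jac(f)$ and saturates the frame. One subtlety peculiar to positive characteristic is ensuring no wild or additional degenerations spoil this count, but since $p>2$ the elliptic fibration is tame and the Kodaira classification applies verbatim, so the lattice-theoretic rank bookkeeping from Section \ref{s:ell} goes through unchanged. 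With the rank of $\Triv$ pinned at $10$, extremality follows, completing the proof.
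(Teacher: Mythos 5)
Your argument reproduces the paper's proof in the clean case, but it has a genuine gap: you never address the possibility that $|2E|$ (or $|-2E|$) has a base locus, and you cannot dispose of this by appealing to the Convention, which is purely terminological. The paper warns in Section \ref{s:ell} that the linear system $|2E|$ ``may still contain some base locus (which consists of $(-2)$-curves)'', so the honest elliptic fibration is $|2E'|$ for a class $E'$ obtained from $E$ by a composition of Picard--Lefschetz reflections in smooth rational curves. For that fibration neither of your two inputs is automatic: the curve $C$ need not satisfy $C.E'=1$ (so it need not be a bisection of the actual fibration), and the curves generating $R_0=R\cap E^\perp$ need not be orthogonal to $E'$ (so they need not lie in fibers), which is exactly what your Shioda--Tate rank count requires. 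The paper's proof devotes its entire second half to this case: applying the reflections, each of the nine $(-2)$-curves representing $R$ is sent to an effective or anti-effective $(-2)$-divisor that is itself supported on smooth rational curves, so after the reflections one still obtains a smooth rational bisection (possibly different from $C$) and a rank-$8$ root lattice supported on fiber components; only then does Shioda--Tate apply.

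The remainder of your proposal is sound and matches the paper's (terser) argument: $C.(2E)=2$ gives the bisection when $|2E|$ is base-point-free, the singular fibers of $Y$ transfer to $\Jac(f)$ (your tameness remark for $p>2$ is a reasonable justification of what the paper simply asserts), and a rank-$8$ root lattice inside the fiber-component lattice forces the trivial lattice of the rational elliptic surface $\Jac(f)$ to have full rank $10$, hence $\MW(\Jac(f))$ finite. So the fix is local: insert the reflection argument before invoking Shioda--Tate, rather than treating $|2E|$ as if it were automatically an elliptic pencil.
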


\begin{proof}
All we need is already contained in Proposition \ref{prop:isotropic}
whose notation we will employ.
For instance, if $|2E|$ already gives an elliptic fibration,
then $C$ is a bisection,
and the root lattice $R_0$ is supported on fiber components.
Since the same singular fibers feature on $\Jac(Y)$,
the claim follows from the Shioda--Tate formula \cite{ShEMS}.

If $|\pm 2E|$ involves some base locus,
then subtracting this amounts
to Picard--Lefschetz reflections in the corresponding smooth rational curves.
In particular, this may affect the $(-2)$-curves $C_1,\hdots,C_9$ representing $R$,
but in essence this may only add smooth rational curves.
Indeed, any $C_i$ is mapped to some effective or anti-effective $(-2)$-divisor
which itself is supported on smooth rational curves.
Thus we do obtain a smooth rational bisection (potentially different from $C$)
and a root lattice $R_0$ supported on fiber components as before.
\end{proof}

\section{Ruling out the additional root types}

Many readers may be used to the situation where a root lattice $R_0$ supported on the singular fibers
of an elliptic fibration determines the fibers themselves 
(up to some ambiguity within Kodaira types $I_2/III$ and $I_3/IV$)
as the extended Dynkin diagrams $\tilde R_v$ corresponding to the orthogonal summands of $R_0$;
conversely, the summands can be recovered by omitting any simple component from each reducible fiber involved.
Presently, this will indeed often be the case, but not always
(leading to different moduli components, in fact).
We shall make this more precise in the following very useful criterion.
Then we shall
illustrate the method with an example in \ref{ex2}
 before turning to the additional root types from \ref{ss:add} in \ref{ss:no-add}.

 \subsection{Criterion for fiber configurations}
 
 By inspection, for all data in Table \ref{T1},
 the isotropic vector $E$ involves a fraction of the positive vector $h$
 and the dual vector $e^\vee$ of a single curve $e\in R$.
 In particular, $R_0=R\setminus\{e\}$.
 This set-up often already suffices to rule out many fiber configurations for $|2E|$
 as we shall record in the following criterion.
 
 \begin{Criterion}
 \label{crit-e}
 If  less than  two  of the curves forming $R_0$ meet $e$,
 then the only possible fibre configuration for $|2E|$ is comprised of  fibers of type $\tilde R_v$
 for the orthogonal summands $R_v$ of $R_0$. 
 \end{Criterion}
 
 \begin{Remark}
 Note that there still may be multiplicities at the multiplicative fibers involved.
 \end{Remark}
 
 \begin{proof}
 Let $\tilde\mR$ denote an extended Dynkin diagram with underlying irreducible root lattice $\mR$.
 Assume that the orthogonal summands $R_1,\hdots,R_m \; (m\geq 1)$ embed into a singular fiber of type $\tilde\mR$
 by way of the reflection $\sigma$.
 By Corollary \ref{cor:ell}, we may assume that the $R_i$ have the same total rank as $\mR$;
 in particular, their support avoids at most one component of $\tilde\mR$.
 Now there are two cases to distinguish.
 
If all $R_i$ are orthogonal to $e$, then
 the bisection $B=\sigma(e)$ meets a single simple fiber component $\Theta$ of $\tilde R$ which is avoided by all $R_i$,
i.e.~the $R_i$ embed as finite index sublattice into $\tilde\mR\setminus\{\Theta\}=\mR$ via $\sigma$.
But then 
\begin{eqnarray*}
\label{eq:mR}
\bigoplus_{i=1}^m \sigma(R_i) \hookrightarrow \mR
\end{eqnarray*}
is primitive by Lemma \ref{lem:no};
on the other hand, $\mR$ is irreducible by assumption, so $m=1$ and $\mR = R_1$ as claimed.

In the other case, if a single curve $e'$ in the $R_i$ meets $e$, then $e'.e=1$
implies that 
either the fiber is multiple and thus multiplicative,
so the rank condition only allows for $\tilde \mR = \tilde R_1$ (i.e.~$m=1$);
or 
$B$ meets two different fiber components $\Theta, \Theta'$ 
of which exactly $\Theta'$, say, is contained in the support of $\sigma(e')$
while both components are off the images of the other curves in the $R_i$ under $\sigma$.
That is, the $R_i$ embed already into $\tilde\mR\setminus\{\Theta\}=\mR$,
and we conclude as before.
 \end{proof}

\begin{Corollary}
\label{cor:noe8}
In the above framework,
there are never two orthogonal root lattices embedding into $\tilde E_8$.
\end{Corollary}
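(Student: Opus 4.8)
The plan is to extract the corollary directly from the mechanism of Criterion \ref{crit-e}, the decisive point being that the root lattice underlying $\tilde E_8$, namely $\mR=E_8$, is \emph{irreducible}: it admits no nontrivial orthogonal splitting. So assume for contradiction that two orthogonal root lattices $R_1,R_2$ embed into a single fiber of type $\tilde E_8$ via the reflection $\sigma$ (the argument for more summands is identical).

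First I would invoke the rank condition from Corollary \ref{cor:ell}: the summands carried by this fiber have total rank $\rk E_8=8$, so $\sigma(R_1)\oplus\sigma(R_2)\hookrightarrow E_8$ is an embedding of finite index. Next, exactly the primitivity argument from the proof of Criterion \ref{crit-e} applies: via Lemma \ref{lem:no} the embedding $\sigma(R_1)\oplus\sigma(R_2)\hookrightarrow E_8$ is \emph{primitive}, since any class in its saturation but outside $\sigma(R_1)\oplus\sigma(R_2)$ is supported on the effective fiber components and would thus contradict Lemma \ref{lem:no}. A sublattice that is simultaneously primitive and of finite index has finite, torsion-free cokernel, hence trivial cokernel, so $\sigma(R_1)\oplus\sigma(R_2)=E_8$. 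As $E_8$ is irreducible, it cannot be a nontrivial orthogonal direct sum, and this contradiction establishes the corollary.

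I expect the only genuine obstacle to be the primitivity step, concretely the bookkeeping of how the bisection $B=\sigma(e)$ meets the $\tilde E_8$ fiber. Because $\tilde E_8$ has a \emph{unique} component of multiplicity one, the relation $B\cdot F=2$ severely restricts the possible intersection patterns; in each one must verify that $R_1$ and $R_2$ indeed land inside the copy of $E_8$ obtained by deleting the component(s) met by $B$, so that Lemma \ref{lem:no} really does force primitivity. Once this is in place, the irreducibility of $E_8$ closes the argument at once.
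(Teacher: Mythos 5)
Your overall skeleton --- finite index from the rank condition, primitivity from Lemma \ref{lem:no}, then irreducibility of $E_8$ --- is exactly the mechanism the paper extracts from Case 1 of the proof of Criterion \ref{crit-e}, and you correctly sense that everything hinges on how $B=\sigma(e)$ meets the $\tilde E_8$ fiber. But that is precisely the step you defer, and your sketch of it is wrong in the one case that matters. The relation $B\cdot F=2$ allows two patterns: either $B$ meets the unique multiplicity-one component $\alpha_0$ with total multiplicity $2$, or $B$ meets a multiplicity-two component $\Theta$ exactly once. In the second pattern, ``the copy of $E_8$ obtained by deleting the component met by $B$'' does not exist: deleting $\Theta$ from $\tilde E_8$ yields $D_8$ or $E_7+A_1$, and in the latter case your concluding appeal to irreducibility collapses --- a primitive finite-index sublattice equal to $E_7+A_1$ is two orthogonal root lattices, exactly what you are trying to exclude. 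So the proposal as written cannot close.

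What rules out the second pattern (and is the real content of the corollary) is geometric, not lattice-theoretic: $B$ is a smooth rational bisection, hence splits into two sections of the K3 cover, and a section meets every fiber in a multiplicity-one component; since $\tilde E_8$ has a unique such component (and a fiber of type $II^*$ cannot be multiple, multiple fibers being multiplicative), $B$ meets the fiber only in $\alpha_0$, with $B\cdot\alpha_0=2$. This has a further consequence you never address, but which the paper needs when it applies the corollary --- e.g.\ to $8A_1\hookrightarrow\tilde E_8$ for $R=A_3+6A_1$, where \emph{two} curves of $R_0$ meet $e$: every class supported on fiber components then has even intersection with $B$, so no curve from the embedded summands can meet $e$ at all; combined with ($\pm$)effectivity and orthogonality to $B$, this forces all images to avoid $\alpha_0$ and to land in the honest sublattice $\langle\alpha_1,\dots,\alpha_8\rangle\cong E_8$, where your primitivity-plus-irreducibility argument does finish. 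Landing in this genuine $E_8$ is also what legitimizes the appeal to Lemma \ref{lem:no}: if one only has an embedding into the quotient of the fiber lattice by its radical, the saturation in $\Num(Y)$ may contain classes of the shape $E-\beta$ ($E$ the half-fiber, $\beta$ a positive root supported on the fiber), which are not $\pm$effective and are therefore not forbidden by the lemma.
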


 \begin{Remark}
 \label{rem:crit}
 It is obvious from the proof of Criterion \label{crit-e}
 that the statement holds fiberwise.
 That is, if some fiber contains more than one orthogonal summand of $R_0$,
 then at least two curves from these summands have to meet $e$.
 \end{Remark}

\subsection{Prototype example ($R=A_5+2A_2$ cont'd)}
\label{ex2}

With the primitive isotropic vector $E$ from Table \ref{T1},
there are several alternatives for singular fibers supporting the root lattice $R_0=A_5+A_2+A_1$:
\[
\tilde A_5+\tilde A_2+\tilde A_1, \; \tilde E_6+\tilde A_2,\; \tilde E_7+\tilde A_1, \tilde E_8.
\]
However, by Criterion \ref{crit-e},
only the first alternative may be compatible with the extra information
provided by the smooth rational bisection $B$ induced by the curve $a_2$.
%
%
To check the compatibility, we can simply draw an appropriate diagram of $(-2)$-curves
comprising fiber components and bisection
(where the dashed arrows mean that the fiber may be multiple (though not simultaneously 
when involving the $I_3$ fiber)).

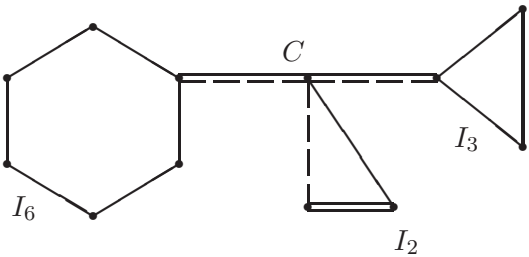
\begin{figure}[ht!]
\setlength{\unitlength}{.45in}
\begin{picture}(8,2.9)(3,0)
\thicklines


\multiput(4,2)(2,0){2}{\circle*{.1}}
\multiput(4,1)(2,0){2}{\circle*{.1}}
\put(4,2){\line(0,-1){1}}
\put(6,2){\line(0,-1){1}}
\put(5,0.4){\circle*{.1}}
\put(5,2.6){\circle*{.1}}


\put(4,2){\line(5,3){1}}
\put(5,0.4){\line(5,3){1}}
\put(4,1){\line(5,-3){1}}
\put(5,2.6){\line(5,-3){1}}

%

\put(7.5,2){\circle*{.1}}

\put(7.5,2.04){\line(-1,0){1.5}}
\multiput(7.5,1.96)(-.4,0){4}{\line(-1,0){.3}}


\put(9,2){\circle*{.1}}
\multiput(7.5,1.96)(.4,0){4}{\line(1,0){.3}}
\put(7.5,2.04){\line(1,0){1.5}}

\put(10,2.8){\circle*{.1}}
\put(10,1.2){\circle*{.1}}

\put(9,2){\line(5,4){1}}
\put(9,2){\line(5,-4){1}}

\put(10,2.8){\line(0,-1){1.6}}


\put(7.5,.5){\circle*{.1}}
\put(8.5,.5){\circle*{.1}}

\put(7.5,.46){\line(1,0){1}}
\put(7.5,.54){\line(1,0){1}}

\multiput(7.5,1.96)(0,-.4){4}{\line(0,-1){.3}}
\put(7.5,2){\line(2,-3){1}}

%
\put(7.2,2.2){$C$}
\put(8.5,0){$I_2$}
\put(9.2,1.2){$I_3$}
\put(4.05,.4){$I_6$}
%
\thinlines
%
%
\end{picture}
\caption{Configurations supporting the root type $R=A_5+2A_2$}
\label{Fig:R}
\end{figure}

%
%

\subsection{Additional root types}
\label{ss:no-add}

We now turn to the additional root types from \ref{ss:add}
which we could not rule out in positive characteristic
because we cannot apply the orbifold Bogomolow--Miyaoka--Yau inequality.
Yet we claim that they do not occur:

\begin{Proposition}
\label{lem:no-add}
No root type from \ref{ss:add} is supported on an Enriques surface in odd characteristic.
\end{Proposition}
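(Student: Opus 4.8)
The plan is to push every one of the seven additional root types through the elliptic-fibration machinery of the previous section and extract a contradiction from the Euler number, with one genuinely resistant case left over.

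First, for each additional $R$ and each supporting Enriques surface $Y$, Corollary \ref{cor:ell} supplies an elliptic fibration whose Jacobian is extremal, with $R_0=R\cap E^\perp$ of rank $8$ carried by the fiber components. Extremality (Shioda--Tate) forces the fiber root lattice $T$ to have rank $8$ as well, so $R_0\subseteq T$ with finite index. I would then show $T=R_0$, i.e.~that the singular fibers are precisely the extended diagrams $\widetilde{R_v}$ of the orthogonal summands $R_v$ of $R_0$. For the six types in which at most one curve of $R_0$ meets $e$ this is Criterion \ref{crit-e} verbatim; the sole exception is $A_3+6A_1$, where $e$ is the central node of $A_3$ and two curves meet it. There I would argue directly: $T$ spans the same rational space as $R_0\subseteq R$ and lies in $\Num(Y)$, hence $T\subseteq R'$, so any root of $T$ outside $R_0$ would lie in $R'\setminus R$ and be neither effective nor anti-effective by Lemma \ref{lem:no} --- impossible for a root of a fiber lattice, which is always a signed sum of the effective fiber components. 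Thus $T=R_0$ throughout.

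The Euler number now finishes six of the seven. A rational elliptic surface has Euler number $12$, so $\sum_v e(\widetilde{R_v})=12$; but $R_0=D_4+4A_1$ forces $I_0^*+4\ta_1$ with Euler number at least $6+8=14$, and $R_0=8A_1$ forces $8\ta_1$ with Euler number at least $16$. Both overshoot, eliminating $D_5+4A_1,\,D_4+5A_1,\,D_4+A_2+3A_1$ (all with $R_0=D_4+4A_1$) together with $A_3+6A_1,\,A_2+7A_1,\,9A_1$ (all with $R_0=8A_1$).

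The remaining case $4A_2+A_1$, which I expect to be the real obstacle, escapes this count: $R_0=4A_2$ forces four fibers $I_3$ of total Euler number exactly $12$, a configuration that does occur (the Hesse Jacobian) and which moreover underlies the \emph{bona fide} type $A_3+3A_2$, so no numerical datum of the fibration separates the two. The difference is the bisection induced by the extra $A_1$: being orthogonal to all of $R_0=4A_2$, it meets only the identity components of the four $I_3$ fibers, whereas for $A_3+3A_2$ the corresponding bisection meets a non-identity component and encodes a $3$-torsion section, comfortably available in $\MW(\Jac)=(\Z/3)^2$. I expect that a bisection meeting only identity components must come from a $2$-torsion section, of which $(\Z/3)^2$ has none; making this precise is the delicate step, since it leans on the torsor and base-change analysis of Section \ref{s:bc} rather than on Euler-number bookkeeping. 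In characteristic $3$ one can instead note that the four-$I_3$ surface degenerates, settling $p=3$ outright and leaving the torsion obstruction to cover $p\ge 5$.
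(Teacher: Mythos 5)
Your handling of the five straightforward types is exactly the paper's argument: Criterion \ref{crit-e} pins the fibers to $\tilde R_v$ and the Euler number of a rational elliptic surface (which is $12$) is exceeded by $\tilde D_4+4\tilde A_1$ and $8\tilde A_1$. But both of the cases you single out for special treatment contain genuine gaps.

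For $A_3+6A_1$, your shortcut proves too much and is therefore unsound. The argument "any root of $T$ outside $R_0$ lies in $R'\setminus R$, hence is neither effective nor anti-effective by Lemma \ref{lem:no}, contradicting that fiber-lattice roots are signed sums of components" never uses how the bisection meets the fibers, so it would apply verbatim to, say, $R=A_7+2A_1$ with $R_0=2A_3+2A_1$ and would "exclude" the finite-index embedding $2A_3+2A_1\hookrightarrow D_6+2A_1$ (fibers $I_2^*, 2I_2, I_2$) --- yet this configuration actually occurs and accounts for a second moduli component (Table \ref{T3}); the same goes for $I_4^*$ under $D_6+A_3$ and $I_1^*+2I_4$ under $3A_3$. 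The hidden error is that the curves representing $R_0$ need not themselves be fiber components: only their images under a composition $\sigma$ of Picard--Lefschetz reflections (removing the base locus of $|2E|$) land in the fiber lattice, and Lemma \ref{lem:no} controls effectivity of classes in $R'$ relative to the original curves, not of their images under $\sigma$; effectivity does not transfer through $\sigma$. This is precisely why Criterion \ref{crit-e} carries the hypothesis that fewer than two curves of $R_0$ meet $e$, which fails for $A_3+6A_1$ (here $e=a_2$ meets both $a_1$ and $a_3$). Accordingly, the paper excludes the residual configurations $\tilde D_6+2\tilde A_1$ and $\tilde E_7+\tilde A_1$ by a hands-on analysis of where $\sigma(a_1),\sigma(a_3)$ and the bisection can sit, reducing to impossible embeddings such as $4A_1\hookrightarrow D_4$, $4A_1\hookrightarrow 3A_1$, $7A_1\hookrightarrow E_7$ (with $2\tilde D_4$ and $\tilde E_8$ handled by Remark \ref{rem:crit} and Corollary \ref{cor:noe8}). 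Some substitute for this case analysis is unavoidable.

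For $4A_2+A_1$ in characteristic $p>3$, the mechanism you anticipate cannot work. On the K3 cover the bisection splits into two \emph{disjoint} sections $O,P$, and since $P$ meets the eight $I_3$ fibers (or merged $I_6$'s) in the pattern dictated by $B$, its height is $h(P)=4$, $5/2$ or $1$ depending on ramification --- in particular nonzero, so $P$ is never torsion, and the absence of $2$-torsion in $\MW(\Jac(f))=(\Z/3\Z)^2$ yields no contradiction: $P$ lives on the quadratic twist, not on $X_{3333}$, and nothing forces it to be torsion. The paper's actual argument is of a different and deeper nature: the fiber components and sections span a sublattice $L\subset\NS(X)$ of rank $19$, $20$ or $21$ whose primitive closure $L'$ has $3$-length at least $4$, $3$ or $2$ (one must rule out $3$-divisibility of $P$ and its torsion translates via the height pairing), and this contradicts the bound that the $3$-length of $L'$ is at most $22-\mathrm{rank}(L')$, which comes from the Tate conjecture for elliptic K3 surfaces (supersingular case: $\NS(X)$ is $p$-elementary with $p\neq 3$) or from Deligne lifting and the embedding of $\NS(X)$ into the K3 lattice $3U+2E_8$. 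Your disposal of characteristic $3$ (degeneration of $X_{3333}$, Lang's classification) does agree with the paper, but the case $p>3$ remains an open gap in your proposal, and the direction you indicate for closing it is a dead end.
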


\begin{proof}
We first assume that $R\neq 4A_2+A_1$ (so in particular $R$ is $2$-elementary).
By Corollary \ref{cor:ell}, the data from Table \ref{T1} endows $Y$ with an elliptic fibration
whose jacobian is an extremal rational elliptic surface.
More precisely, except for $R=A_3+6A_1$, 
Criterion \ref{crit-e} predicts that
the singular fibers are given as $\tilde R_v$ for the orthogonal summands of $R_0$.
However, this would cause the Euler-Poincar\'e characteristic to exceed $12$,
contradiction.

For $R=A_3+6A_1$, the lattice $R_0=8A_1$ could a priori embed into a number of fiber configurations:
\[
8\tilde A_1, \;\; \tilde D_4+4\tilde A_1,  \;\; 2\tilde D_4, \;\; \tilde D_6 + 2\tilde A_1, \;\; \tilde E_7+\tilde A_1,\;\;
\tilde E_8.
\]
The first two, however, are ruled out by Euler-Poincar\'e characteristic considerations as above, 
the third by Remark \ref{rem:crit} and 
the last by Corollary \ref{cor:noe8}.
So assume that 
\[
R_0 \stackrel{\sigma}{\hookrightarrow} \tilde D_6+2\tilde A_1.
\]
Write $a_1, a_3\in A_3\cap R_0$ and $a_4,\hdots, a_7$ for the generators of the other four copies of $A_1$
embedding into $\tilde D_6$.
Clearly the bisection $B$ meets two different fiber components $\Theta, \Theta'$.
If the support of $\sigma(a_1), \sigma(a_3)$ were to contain only one and the same component, say $\Theta$,
then 
\[
R_0\hookrightarrow \tilde D_6\setminus\{\Theta'\} = D_6,
\]
giving a contradiction by Lemma \ref{lem:no} (as in the proof of Criterion \ref{crit-e}).
Otherwise, we can apply reflections in fiber components to ensure that
\[
\sigma(a_1) = \Theta, \;\;\; \sigma(a_3) = \Theta'.
\]
It follows that
the support of the remaining $(-2)$-divisors $\sigma(a_4),\hdots,\sigma(a_7)$
contains neither $\Theta$ or $\Theta'$ nor the components adjacent to these two
(since all $a_i$ are orthogonal).
Depending on the precise location of $\Theta$ and $\Theta'$ in the fiber,
this implies
\[
4A_1 \hookrightarrow D_4 \;\;\; \text{ or } \;\;\; 4A_1 \hookrightarrow 3A_1,
\]
each of which is impossible by Lemma \ref{lem:no} or for rank reasons.
Similarly, if  
\[
R_0 \stackrel{\sigma}{\hookrightarrow} \tilde E_7+\tilde A_1,
\]
then the analogous reasoning leads to a contradiction by way of the embeddings
\[
7A_1 \hookrightarrow E_7 \;\;\; \text{ or } \;\;\; 5A_1 \hookrightarrow D_4.
\]
This excludes the root type $R=A_3+6A_1$.

It remains to discuss the root type $R=4A_2+A_1$
with bisection $B$ off $R_0$ induced by the data in Table \ref{T1}.
By Criterion \ref{crit-e}, the singular fibers are determined as $4\tilde A_2$,
i.e.~$Y$ (and thus $\Jac(Y)$) has four fibers of Kodaira type $I_3$.
This may seem plausible, yet we claim that it does not occur.
In characteristic $3$, this follows readily from the classification
of  semi-stable extremal rational elliptic surfaces in \cite{Lang1}
(or from the degeneration of the group scheme of $3$-torsion points in characteristic $3$,
as this causes the Hesse pencil $X_{3333}$ to degenerate, cf.~Table \ref{T0}).
Otherwise, we have $p>3$ and consider the K3 cover $X$ which is endowed with an elliptic fibration
with 8 fibers of type $I_3$ (or potentially some merging to type $I_6$),
and disjoint sections $O, P$ which $B$ splits into.
Since $O$ and $P$ are orthogonal, the height of $P$
in the language of Mordell--Weil lattices \cite{ShMW}
only depends on the contraction terms at the fibers met non-trivially,
i.e.~in a different component than $O$:
\[
h(P) = 4 - \sum_v \mbox{contr}_v(P).
\]
Since on $Y$ the bisection $B$ meets every singular fiber in a single component 
(since $A_1$ inducing $B$ is perpendicular to the four copies of $A_2$),
the same holds true for its pre image $O+P$ on $X$
unless the fiber in question is ramified (attaining type $I_6$).
In the latter case, $O$ and $P$ meet exactly opposite fiber components, so
\[
h(P) =
\begin{cases}
4 & \text{ if there is no ramified fiber,}\\
4 - \frac 32 = \frac 52 & \text{ if there is one ramified fiber,}\\
4 - 2\cdot\frac 32 = 1 & \text{ if there are two ramified fibers.}\\
\end{cases}
\]
It follows that the given fiber components and sections generate a sublattice 
\begin{eqnarray}
\label{eq:emb}
L \hookrightarrow \NS(X)
\end{eqnarray}
of rank $19$ (resp.~$20, 21$).
We continue by comparing the $3$-lengths of these lattices,
i.e.~the minimum number of generators of the $3$-torsion subgroup of the discriminant group.
Presently $L$ has  $3$-length $8$ (resp.~$7, 6$),
but the embedding \eqref{eq:emb} need not be primitive
because there may be three-torsion sections involved.
In any case the primitive closure $L'$ will have $3$-length at least $4$ (resp.~$3,2$).
To see this, we use that full $3$-torsion in $\MW(X)$ may let the $3$-length drop by $4$ at most,
but on top of that neither the section $P$ nor any of its translates by torsion sections may be $3$-divisible,
since otherwise the resulting section would have height $4/9$ (resp.~$5/18$ or $1/9$), but 
the height pairing has image in $\frac 16\Z$ by inspection of the present singular fibers
and their correction terms (which lie in the set $\{0,2/3,5/6,4/3,3/2\}$ by \cite{ShMW}).

On the other hand,  since the Tate conjecture is known for elliptic K3 surfaces by \cite{ASD},
either $X$ is supersingular and $\NS(X)$ is $p$-elementary of rank $22$,
or by \cite{Deligne}, $X$ lifts to characteristic zero with all of $\NS(X)$ which thus embeds into the K3 lattice $\Lambda=3U+2E_8$.
Either case imposes the rank inequality
\[
(\text{$3$-length})(L') = (\text{$3$-length})(L^\perp) \leq \text{rank}(L^\perp) = 22-\mbox{rank}(L')
\]
where the orthogonal complement is taken in $\NS(X)$ in the supersingular case resp.~in $\Lambda$.
Since this inequality never holds  presently, we conclude that
the root type $R=4A_2+A_1$ cannot be supported on an Enriques surface.
This completes the proof of Proposition \ref{lem:no-add}.
\end{proof}

\begin{Corollary}
There are no other root types supported on Enriques surfaces in odd characteristic compared to characteristic zero.
\end{Corollary}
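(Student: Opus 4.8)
The plan is to read off the Corollary as the combination of two ingredients already in place: the characteristic-free lattice classification of Section~\ref{s:latt} and the geometric exclusion furnished by Proposition~\ref{lem:no-add}. Since the existence of all $31$ characteristic-zero types is the content of Theorem~\ref{thm}, the only thing that remains for the Corollary is the \emph{upper bound}, namely that no root type outside the characteristic-zero list \ref{ss:31} can be carried by an Enriques surface in odd characteristic.

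First I would invoke the lattice-theoretic setup. Any root type $R$ of rank $9$ supported on an Enriques surface $Y$ gives an embedding $R\hookrightarrow\Num(Y)\cong U+E_8$ as in \eqref{eq:hook}, and passing to the primitive closure $R'$ one has, by unimodularity of $U+E_8$, the discriminant-form matching \eqref{eq:q} with a single positive class $H$, forcing $R'$ to have length one by \eqref{eq:d}. This test is purely a statement about quadratic forms and hence insensitive to the characteristic. Running it over the complete list of rank-$9$ root types (from $A_9$ and $D_9$ down to $9A_1$), exactly the $38$ root types collected in \ref{ss:31} and \ref{ss:add} survive, as the mechanism is illustrated by Examples~\ref{ex} and~\ref{ex1}. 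The point to stress is that this criterion by itself cannot distinguish the $7$ types of \ref{ss:add} from the $31$ of \ref{ss:31}, because the orbifold Bogomolov--Miyaoka--Yau inequality that rules them out over $\C$ is unavailable in positive characteristic.

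Second, Proposition~\ref{lem:no-add} supplies precisely the missing exclusion: none of the $7$ additional root types from \ref{ss:add} is supported on an Enriques surface in odd characteristic. Combining the two steps, every rank-$9$ root type realizable in odd characteristic must already lie in \ref{ss:31}, which by \cite{HKO} and \cite{S-Q-hom} is exactly the set of root types occurring over $\C$. Hence no further root types arise, and the Corollary follows. (In characteristics $3$ and $5$ the set only shrinks further, so the containment is \emph{a fortiori} preserved there.)

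I expect essentially all of the genuine difficulty to reside in Proposition~\ref{lem:no-add}, which is already established above through extremal rational elliptic surfaces, the Shioda--Tate formula, and the Mordell--Weil height and $3$-length analysis on the K3 cover; granting that proposition together with the discriminant-form bookkeeping of Section~\ref{s:latt}, the Corollary is an immediate consequence and requires no new input.
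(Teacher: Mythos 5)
Your proposal is correct and matches the paper's (implicit) argument exactly: the characteristic-free discriminant-form analysis of Section~\ref{s:latt} bounds the candidates to the $38$ types of \ref{ss:31} and \ref{ss:add}, and Proposition~\ref{lem:no-add} eliminates the $7$ additional ones, so the Corollary is an immediate consequence with no new input. Your reading that the Corollary concerns only the upper bound (the existence half being Theorem~\ref{thm}) is also the intended one.
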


\subsection{Orbifold Bogomolow--Miyaoka--Yau inequality}

As a byproduct of Proposition \ref{lem:no-add} we record the following consequence:

\begin{Corollary}
The orbifold Bogomolow--Miyaoka--Yau inequality holds for any $\Q_\ell$-cohomology projective plane
obtained from an Enriques surface.
\end{Corollary}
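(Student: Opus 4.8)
The plan is to observe that for the surfaces in question the orbifold Bogomolov--Miyaoka--Yau inequality degenerates into a purely numerical condition on the root type $R$, so that it can simply be read off from the classification already established. First I would recall the inequality in the form $\bar c_1^2(S)\leq 3\,\bar c_2(S)$, with $\bar c_1^2(S)=K_S^2$ and $\bar c_2(S)$ the orbifold Euler number. Since $K_S$ is numerically trivial by assumption, the left-hand side vanishes in every characteristic, and the inequality collapses to $\bar c_2(S)\geq 0$.

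Next I would make explicit that $\bar c_2(S)$ is a characteristic-free function of $R$ alone. Writing the sum over the singular points $P$ of $S$ with associated ADE group of order $|G_P|$, one has
\[
\bar c_2(S)=e(S)-\sum_P\Bigl(1-\tfrac{1}{|G_P|}\Bigr).
\]
Here $e(S)=\sum_i(-1)^i b_i(S)=3$ because $S$ is a $\Q_\ell$-cohomology projective plane, while each order is the standard ADE datum ($n+1$ for $A_n$, $4(k-2)$ for $D_k$, and $24,48,120$ for $E_6,E_7,E_8$) and is therefore determined combinatorially by $R$. Thus both sides of the inequality are computed from the same discrete data in any odd characteristic.

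Finally I would feed in the classification. By Proposition~\ref{lem:no-add} together with the lattice analysis of Section~\ref{s:latt}, the root types supported on an Enriques surface in odd characteristic are exactly the 31 listed in \ref{ss:31}, i.e.~precisely those occurring over $\C$. A direct check (or \cite{HK} over $\C$) gives $\bar c_2(S)\geq 0$ for each of these, with equality only for $2A_3+3A_1$; the seven types of \ref{ss:add} are exactly those with $\bar c_2(S)<0$, which is why they were excluded over $\C$ and had to be ruled out separately here. The Corollary then follows. There is no genuine obstacle left, since all the real work has gone into the classification; the only point needing care is the transfer across characteristics --- namely that $K_S^2=0$ from numerical triviality and that $\bar c_2(S)$ is governed by the combinatorial ADE orders rather than by any characteristic-sensitive local fundamental group, so that the numerical inequality verified over $\C$ persists verbatim.
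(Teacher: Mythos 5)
Your core argument is correct and is essentially the paper's own route: the paper states this Corollary as a direct byproduct of Proposition \ref{lem:no-add}, exactly because the inequality degenerates (via $K_S^2=0$ and $e(S)=3$) into a characteristic-free numerical condition on the root type $R$, which one then checks for the 31 types of \ref{ss:31} that actually occur --- equivalently, one notes that each realized type also occurs over $\C$, where the inequality is a theorem, and both sides depend only on the combinatorial data.

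However, one of your supporting claims is false and should be removed. You assert that the seven types of \ref{ss:add} are \emph{exactly} those with $\bar c_2(S)<0$. This fails for $D_5+4A_1$: here
\[
\sum_P\Bigl(1-\tfrac{1}{|G_P|}\Bigr)=\Bigl(1-\tfrac1{12}\Bigr)+4\cdot\tfrac12=\tfrac{35}{12}<3,
\]
so $\bar c_2(S)=\tfrac1{12}>0$ for this type, even though it is one of the seven excluded ones. The lesson is that the naive inequality $\bar c_2\geq 0$ is \emph{not} what rules out all seven additional types over $\C$; what \cite{HK} proves is sharper (either $R$ has at most $4$ orthogonal summands, with extra conditions, or $R=2A_3+3A_1$), and its proof uses the orbifold Bogomolov--Miyaoka--Yau machinery in a refined way rather than the bare positivity of the orbifold Euler number. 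This error does not invalidate your proof of the Corollary as stated: since by the classification only the 31 types of \ref{ss:31} occur, and a direct check shows each satisfies $\bar c_2(S)\geq 0$ (with equality precisely for $2A_3+3A_1$), the inequality holds for every $\Q_\ell$-cohomology projective plane arising from an Enriques surface. But the claimed equivalence between membership in \ref{ss:add} and negativity of $\bar c_2$ is wrong and, if kept, would misrepresent how the characteristic-zero exclusions actually work.
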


\begin{Remark}
Of course, the above argument works in characteristic zero, too.
Hence, in the classification of maximal root types supported on Enriques surfaces in \cite{HKO} and \cite{S-Q-hom},
there is no formal need to appeal to the orbifold Bogomolov--Miyaoka--Yau inequality.
\end{Remark}

\section{Base change construction}
\label{s:bc}

Usually Enriques surfaces are not so easy to get a hand on explicitly,
especially with regards to the smooth rational curves which we presently care about.
Here one may equally well think about Enriques' original model, Horikawa's model or the torsor interpretation
alluded to in \ref{ss:jac}.
Presently, however, the root lattice $R$ of rank $9$ endows $Y$ with so much structure
that everything can be made totally explicit.
This is due to the special elliptic fibration from Corollary \ref{cor:ell}.
Notably, $\pi^*E$ induces an elliptic fibration on the K3 cover
\[
\pi^*f: \; X \longrightarrow \PP^1
\]
which generically would have no section,
but presently clearly does as the smooth rational bisection $B$ of $|2E|$ splits into two disjoint smooth rational curves on $X$
which are exchanged by the Enriques involution $\tau$ (as exploited in the last part of \ref{ss:no-add}).
We denote them by $O$ and $P$ where the former indicates the chosen zero section of $\pi^*f$.

It is exactly this situation where a base change construction of Kond\=o \cite{Kondo-Aut} kicks in
(later extended in \cite{HS}).
Since Kond\=o's original argument works only over $\C$ as it builds on Nikulin's classification 
of non-symplectic automorphisms of complex K3 surfaces,
we shall give a proof which also works in odd characteristic.
(A similar argument is given independently in work of  Martin \cite{Martin}.)

\begin{Proposition}
\label{prop:imath}
There is an involution $\imath$ on $X$
such that 
\[
\tau = (\text{translation by } P) \circ \imath.
\]
\end{Proposition}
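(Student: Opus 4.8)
The plan is to produce $\imath$ by hand as $\imath := t_{-P}\circ\tau$, where $t_{-P}$ denotes translation by the section $-P$ in $\MW(\pi^*f)$, and then to verify directly that this composite is an involution. This is exactly the point where Kond\=o invokes Nikulin's classification of non-symplectic involutions; replacing that input by an explicit Mordell--Weil computation is what makes the argument run verbatim in odd characteristic. Note that, by construction, $t_P\circ\imath=\tau$, so the entire content of the proposition reduces to the single claim $\imath^2=\id$.

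First I would record the two structural facts already available. Since $\tau$ is the deck transformation of $\pi$ and the fibration $\pi^*f$ is pulled back from $Y$, the involution $\tau$ carries fibers to fibers and hence preserves $\pi^*f$; and by the discussion preceding the proposition it interchanges the two disjoint sections, $\tau(O)=P$ and $\tau(P)=O$. Because $X\to\PP^1$ is relatively minimal (being a K3 surface) and carries the section $O$, every element of $\MW(\pi^*f)$ acts on $X$ as a fibration automorphism, so $t_{\pm P}$, and therefore $\imath$, are honest automorphisms of $X$. Evaluating on sections then gives $\imath(O)=t_{-P}(\tau(O))=t_{-P}(P)=O$, so $\imath$ fixes the zero section and thus lies in the group of fibration automorphisms preserving $O$.

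For such an automorphism one has the standard conjugation rule $\imath\,t_Q\,\imath^{-1}=t_{\imath_*Q}$, where $\imath_*$ is the induced group automorphism of $\MW(\pi^*f)$; this holds even though $\imath$ may act nontrivially on the base, since $\imath$ fixes $O$ and restricts to a group isomorphism on each fiber. Writing $\tau=t_P\circ\imath$, using $\tau^2=\id$, and applying this rule to the zero-section-preserving factor $\imath$, one finds $\tau\,t_{-P}\,\tau=t_{-\imath_*(P)}$, and hence
\[
\imath^2 = t_{-P}\,\tau\,t_{-P}\,\tau = t_{-P}\cdot t_{-\imath_*(P)} = t_{-P-\imath_*(P)}.
\]
Finally I would compute $\imath_*(P)$ directly: since $\imath(P)=t_{-P}(\tau(P))=t_{-P}(O)=-P$, we get $\imath_*(P)=-P$, whence $\imath^2=t_{-P+P}=\id$. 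This produces the desired involution $\imath$ with $\tau=t_P\circ\imath$.

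The only inputs requiring care — and the only places where the characteristic could in principle intervene — are the two ``standard'' facts used above: that translation by a Mordell--Weil section extends from the smooth fibers to a global automorphism of $X$, and that conjugation by a zero-section-preserving automorphism sends $t_Q$ to $t_{\imath_*Q}$. Both are purely algebraic statements about relatively minimal elliptic surfaces with a section and hold in any characteristic, so no complex-analytic or Nikulin-type ingredient is needed; this is precisely what allows the argument to dispense with Kond\=o's hypothesis. I expect the main (if modest) obstacle to be careful bookkeeping of the distinction between the action of $\tau$ on the base and on the fibers, and making sure the conjugation identity is applied to the zero-section-preserving factor $\imath$ rather than to $\tau$ itself, for which the decomposition $\tau=t_P\circ\imath$ is the key device.
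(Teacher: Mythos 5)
Your proof is correct, and it starts from the same decomposition as the paper --- both define $\imath := t_{-P}\circ\tau$ and reduce the proposition to the single claim $\imath^2=\id$ --- but your verification of that claim takes a genuinely different route. The paper argues via the canonical bundle: $\imath^2$ preserves every fiber and fixes $O$ pointwise, hence acts on the generic fiber $X_\eta$ as an automorphism fixing the origin, so $\mathrm{ord}(\imath^2)\in\{1,2,3,4,6\}$; since $\imath^2$ is symplectic and (outside characteristic $3$) its order is faithfully detected by the action on $H^0(X_\eta,\omega_{X_\eta})$, it must be trivial, while the remaining wild possibility $\mathrm{ord}(\imath^2)=3$ in characteristic $3$ requires a separate geometric argument (isotriviality with $j=0$, smoothness of the ramified fibers, and a fixed-point count there). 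You instead show that $\imath^2$ is a translation and identify it: since $\imath$ fixes the zero section, it restricts to origin-preserving, hence group, isomorphisms between smooth fibers, which gives the conjugation rule $\imath\, t_Q\, \imath^{-1}=t_{\imath_*Q}$ (valid on the dense open union of smooth fibers and therefore everywhere), and then $\imath^2=t_{-P-\imath_*P}$ together with $\imath_*P=\imath(P)=t_{-P}(\tau(P))=t_{-P}(O)=-P$ yields $\imath^2=\id$. The two inputs you flag are indeed characteristic-free, so the argument is sound; one can even bypass the conjugation formalism and check $\imath^2=\id$ pointwise on smooth fibers, concluding by density. What your route buys: it is completely uniform in $p$ --- no case distinction, no appeal to the symplectic structure --- and because a translation is never a wild automorphism fixing the origin, the characteristic-$3$ subtlety that the paper must labor over simply cannot arise; the same argument would work verbatim in characteristic $2$ and, more generally, for any fibration-preserving involution swapping two disjoint sections. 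What the paper's route records along the way (and reuses immediately afterwards) is the fixed-point behaviour of $\imath$ on $O$ and at the ramified fibers, which feeds into the identification $X/\imath=\Jac(f)$ and the quadratic-twist equations of Section 7; your group-theoretic argument does not produce that by itself, though it is easily recovered from $\imath(O)=O$ and the action on the base.
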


\begin{proof}
Let $t_P$ denote translation by $P$, defined fiberwise and extended to a symplectic automorphism of $X$.
Set
\[
\imath = t_{-P}\circ\tau = t_P^{-1}\circ\tau.
\]
Then $\imath$ fixes $O$ as a set, with two fixed points in the ramified fibers.
We claim that $\imath^2=1$.
To see this, note that $\imath^2$ fixes
fibers as sets and
$O$ pointwise.
Hence $\imath^2$ acts as an automorphism on the generic fiber $X_\eta$ of $\pi^*f$.
In particular,
\[
\mbox{ord}(\imath^2)\in\{1,2,3,4,6\}.
\]
Outside characteristic $3$, the order corresponds to the action on the regular $1$-form of $X_\eta$,
i.e.~$\imath^2$ has order $n$ if and only if it acts by a primitive $n$-th root of unity on $H^0(X_\eta, \omega_{X_\eta})$.
Since given a minimal Weierstrass form, we locally have $\omega_X=dt\wedge\omega_{X_\eta}$,
the same holds for its action on $H^0(X,\omega_X)$.
But by definition, $\imath^2$ is symplectic, so $\imath^2=1$.

In characteristic $3$, this argument may only break down for a wild automorphism,
more precisely~if $\mbox{ord}(\imath^2)=3$ (order $6$ is ruled out by the above reasoning because $\imath^2$ is symplectic.)
This in turn implies that $X_\eta$ has zero j-invariant.
In particular, $\pi^*f$ is an isotrivial fibration,
and all singular fibers are additive.
But then the ramified fibers necessarily are smooth (since otherwise they would have to be multiplicative),
and $\imath^2$ fixes just one point on them.
In detail, if the generic fiber is given by 
\[
y^2 = x^3 - \alpha^2 x, \;\;\; \alpha\in K(t),
\]
then $\imath^2$ would have to act by 
\[
(x,y)\mapsto (x+\alpha,y) \;\; \text{ or } \;\; (x-\alpha,y),
\]
and this action directly extends to the all smooth fibers including $O$ being the single fixed point.
On the other hand, $\imath$ itself acts as an automorphism of a ramified fiber $X_0$
as it fixes $O|_{X_0}$. In particular, from the definition of $\imath$ we can infer 
\[
\imath(P|_{X_0}) = -P|_{X_0} \; \Longrightarrow \; \imath^2(P|_{X_0}) = P|_{X_0},
\]
i.e. $\imath^2$ has at least two fixed points on the ramified fiber $X_0$, contradiction.
\end{proof}

With the involution $\imath$ at our disposal,
the reasoning from \cite{Kondo-Aut} applies to prove that
$X$ is in fact a quadratic base change from the rational elliptic surface $\Jac(f)$
(hence the title of this section).

\begin{Corollary}
$X/\imath=\Jac(f)$.
\end{Corollary}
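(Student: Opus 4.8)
The plan is to read the quotient $X/\imath$ off the way $\imath$ interacts with the elliptic fibration $\pi^*f\colon X\to\PP^1$, and then to identify it with $\Jac(f)$ through its generic fibre. First I would pin down how $\imath$ moves the base. Since $\tau$ is the deck involution of the K3 cover $\pi\colon X\to Y$, it preserves $\pi^*f$; as it exchanges the two sections $O$ and $P$ while being fixed-point-free, it cannot preserve the individual fibres, for otherwise it would act fibrewise as translation by $P$ and hence acquire fixed points on the ramified fibres. Thus $\tau$ covers a non-trivial involution $\sigma$ of the base $\PP^1$, whose two fixed points are exactly the images of the two ramified fibres. Because $t_{-P}$ acts trivially on the base, $\imath=t_{-P}\circ\tau$ covers the same $\sigma$, its two fixed points lying on the ramified fibres, in agreement with Proposition \ref{prop:imath}.

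Next I would pass to the quotient. Writing $c\colon\PP^1\to\PP^1/\sigma$ for the associated double cover, the quotient map $q\colon X\to X/\imath=:J'$ lies over $c$, so $J'\to\PP^1/\sigma$ is again an elliptic fibration; here $\PP^1/\sigma$ is canonically the base of $f$, with $c$ branched precisely at the two points carrying the multiple fibres of $f$. Since $\imath$ fixes $O$ as a set, its image $q(O)$ is a section, so $J'$ is an elliptic surface with a section. Recalling that $\Jac(f)$ is a rational elliptic surface (standard for Enriques fibrations, since $e(\Jac(f))=e(Y)=12$), it then suffices to match the two surfaces over the common base $\PP^1/\sigma$, and one checks that $X/\imath$ is already the relatively minimal model (if needed, via an Euler-number count using $e(X)=24$ and that $\mathrm{Fix}(\imath)$ sits over the two branch points).

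Finally I would identify the generic fibre. Over $M=K(\PP^1)$ the fibration $\pi^*f$ has generic fibre the generic fibre of $f$ equipped with the rational point $O$, hence canonically isomorphic to $\mathrm{Pic}^0$ of itself, i.e. to the base change of the generic fibre of $\Jac(f)$ along $M/K(\PP^1/\sigma)$. The descent datum supplied by $\imath$ fixes $O$ and is therefore compatible with the canonical, Galois-equivariant descent of $\mathrm{Pic}^0$ along $M/K(\PP^1/\sigma)$; consequently the generic fibre of $J'$ is the generic fibre of $\Jac(f)$. As a relatively minimal elliptic surface with section is determined by its base together with its generic fibre, this gives $X/\imath\cong\Jac(f)$.

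The hard part will be the last step: making the Galois-descent identification clean and characteristic-free, and in particular ensuring that $X/\imath$ is $\Jac(f)$ itself rather than a non-trivial quadratic twist. Working with $\mathrm{Pic}^0$, which is canonical and insensitive to the $\pm1$-ambiguity in the choice of $\imath$, is what sidesteps the twist issue; but one must still verify compatibility of the descent data at the two ramified fibres, where the isotrivial, wildly ramified situation in characteristic $3$ flagged in Proposition \ref{prop:imath} has to be handled separately.
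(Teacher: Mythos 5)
Your route---show $X/\imath$ is an elliptic surface with section over $\PP^1/\sigma$, identify its generic fibre with that of $\Jac(f)$ by Galois descent, then invoke uniqueness of the relatively minimal model---is genuinely different from the paper's, which simply delegates the Corollary to Kond\=o's base-change reasoning in \cite{Kondo-Aut}. But the decisive step, the one you yourself flag as ``the hard part,'' is not actually carried out, and the justification you offer is invalid. The involution $\jmath=\imath\circ(\text{inversion})$ \emph{also} fixes $O$ and \emph{also} covers $\sigma$, hence also supplies a descent datum on the generic fibre $X_\eta$; yet $X_\eta$ descended along $\jmath$ is not $\Jac(f)_\eta$ but its quadratic twist by the degree-two extension of function fields---this is exactly the generic fibre of the K3 surface $X'=\widetilde{X/\jmath}$ of Section \ref{s:eqns}, certainly not the rational surface $\Jac(f)$. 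So ``the descent datum supplied by $\imath$ fixes $O$ and is therefore compatible with the canonical descent of $\Pic^0$'' cannot be a proof: fixing $O$ does not distinguish $\imath$ from $\jmath$, and $\Pic^0$-descent is \emph{sensitive}, not insensitive, to this $\pm1$-ambiguity; that sensitivity is precisely the twist issue. What singles out $\imath$ is its defining formula $\imath=t_{-P}\circ\tau$ together with the fact that translations act trivially on $\Pic^0$: the canonical Galois descent datum on $\Pic^0(X_\eta)$ is $\tau_*$, and under the $O$-identification $Q\mapsto[Q-O]$ one computes $\tau_*[Q-O]=[\tau Q-\tau O]=[\tau Q-P]$, so $\tau_*$ transports to $t_{-P}\circ\tau=\imath$ (while $\jmath$ transports to $-\tau_*$). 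This short computation, which uses $\tau(O)=P$, is the missing content; without it your argument would equally ``prove'' $X/\jmath=\Jac(f)$, which is false.

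A second step that fails as stated is the relative minimality of $X/\imath$ ``via an Euler-number count.'' If a ramified fibre of $\Jac(f)$ has type $I_n$ with $n\geq 1$---which happens throughout the paper, e.g.\ the second moduli components in \ref{ss:2nd} impose ramification at an $I_6$ or $I_8$ fibre---then the corresponding fibre of $X$ is $I_{2n}$, and $\mathrm{Fix}(\imath)$ consists of the $n$ ``old'' components of that fibre: disjoint smooth rational curves of total Euler number $2n$, not $0$. The count $e(X)=2e(X/\imath)-e(\mathrm{Fix}(\imath))$ then gives $e(X/\imath)=12+n>12$; concretely, the images of the $n$ ``new'' components are vertical $(-1)$-curves, and $X/\imath$ is the blow-up of $\Jac(f)$ at the nodes of the ramified fibres rather than $\Jac(f)$ itself. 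So the Corollary must be read modulo this blow-down (as it is in Kond\=o's quadratic base-change picture), and your uniqueness-of-minimal-model conclusion needs that contraction as an extra step. Finally, a minor point: your reason why $\tau$ moves the fibres (``otherwise it would act fibrewise as translation by $P$'') is unjustified, since an automorphism taking $O$ to $P$ need not be a translation; the honest reason is that $\pi^*f$ is by construction the Stein factorization of $f\circ\pi$, and $\tau$ interchanges the two components of the preimage of a general fibre of $f$.
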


\begin{Remark}
The base change type construction may also be applied to the problem
of Enriques surfaces with finite automorphism group in positive characteristic
(much like what Kond\=o did over $\C$, but now without reference to Torelli type theorems).
For details, see upcoming work of Martin \cite{Martin}.
\end{Remark}

\section{Translation into equations}
\label{s:eqns}

It is about time to collect what we have found so far for Enriques surfaces  $Y$ supporting a root type $R$ from \eqref{eq:list}, 
and start to take advantage of it
following \cite{HS}.
To this end, we note two crucial observations:

\begin{Observation}
\label{obs}
\begin{enumerate}
\item\label{it1}
the K3 cover $X$ is uniquely determined by the rational elliptic surface $\Jac(f)$
and the base change which in turn only depends on  the location of the ramified fibers;
both information we can easily read off from $Y$;
\item\label{it2}
the section $P$ which is used to define the Enriques involution $\tau=t_P\circ \imath$
is anti-invariant for $\imath$, i.e.~$\imath^*(P) = -P$ in $\MW(X)$.
\end{enumerate}
\end{Observation}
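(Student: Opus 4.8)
The plan is to treat the two assertions separately, since (i) is essentially a bookkeeping consequence of the base change description obtained in the previous section, while (ii) reduces to a short group-theoretic computation in $\MW(X)$.

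For (i), I would start from the Corollary identifying $X/\imath$ with $\Jac(f)$, which exhibits $X$ as the quadratic base change of the rational elliptic surface $\Jac(f)$ along the double cover $\PP^1\to\PP^1$ induced by $\imath$ on the base. A rational elliptic surface is determined up to isomorphism by its Weierstrass model, and this model --- together with the configuration of singular fibres and the $j$-map --- is read off directly from the elliptic fibration on $Y$ via Corollary \ref{cor:ell}. The base change, in turn, is a degree-two cover of $\PP^1$, hence over the algebraically closed field $K$ it is determined by its two branch points; these are precisely the images under $f$ of the two multiple (ramified) fibres of the Enriques fibration, which are again intrinsic to $Y$. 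Forming the fibre product of $\Jac(f)$ with this cover and passing to the relatively minimal smooth model then recovers $X$ uniquely, proving (i). The only point requiring care is that the branch data genuinely is canonical on $Y$ and independent of auxiliary choices; since the ramified fibres of an Enriques elliptic fibration are intrinsic, no twist ambiguity arises.

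For (ii), the key inputs are that $\imath^2=\id$ (Proposition \ref{prop:imath}) and that $\imath$ preserves the zero section $O$. The latter guarantees that $\imath$ induces a group automorphism $\imath^*$ of $\MW(X)$ and yields the conjugation identity $\imath\circ t_P\circ\imath^{-1}=t_{\imath^*(P)}$. I would then expand
\[
\tau^2=(t_P\circ\imath)\circ(t_P\circ\imath)=t_P\circ(\imath\circ t_P\circ\imath^{-1})\circ\imath^2=t_{P+\imath^*(P)},
\]
using the conjugation identity and $\imath^2=\id$ in the last step. Because $\tau$ is the fixed-point-free Enriques involution, $\tau^2=\id$, so the translation $t_{P+\imath^*(P)}$ is trivial, forcing $P+\imath^*(P)=0$, i.e.~$\imath^*(P)=-P$ in $\MW(X)$.

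The main obstacle I anticipate lies not in either computation but in justifying the conjugation formula cleanly: $\imath$ does not act fibrewise but covers the nontrivial base involution $\sigma$, so $\imath^*$ must be defined by combining the fibrewise action with the base swap, and one has to check that the induced map $\imath|_{X_u}\colon X_u\to X_{\sigma(u)}$ is a group isomorphism --- which it is, being an automorphism of the elliptic surface fixing $O$ --- before the identity $\imath\circ t_P\circ\imath^{-1}=t_{\imath^*(P)}$ is legitimate. Once this is secured, both parts of the Observation follow formally.
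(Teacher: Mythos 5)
Your proposal is correct and follows the route the paper intends: the Observation is stated there without a formal proof, as an immediate consequence of Proposition \ref{prop:imath} and its Corollary $X/\imath = \Jac(f)$, and your two arguments are the natural way to make this explicit. For (i), identifying $X$ with the relatively minimal model of the fibre product of $\Jac(f)$ with the double cover of $\PP^1$ branched at the two points supporting the multiple fibres is exactly the content of that Corollary, and your remark that a degree-two cover of $\PP^1$ over an algebraically closed field is determined by its branch points settles the uniqueness. For (ii), your computation via $\tau^2=\id$, $\imath^2=\id$ and the conjugation identity $\imath\circ t_P\circ\imath^{-1}=t_{\imath^*(P)}$ is sound, and your care in checking that $\imath$ covers the base involution while restricting fibrewise to group isomorphisms (because it preserves $O$) is exactly the point that legitimizes the conjugation formula; note only that the paper's set-up yields an even shorter derivation, since $\tau$ exchanges the two curves $O$ and $P$ into which the bisection $B$ splits, so that $\imath^*(P)=t_{-P}(\tau(P))=t_{-P}(O)=-P$ follows in one line from the definition $\imath=t_{-P}\circ\tau$.
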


In consequence, considering \eqref{it2}, the section $P$ is invariant 
for the composition $\jmath$ of $\imath$ with the hyperelliptic involution on $X_\eta$,
i.e.~it is induced from a section $P'$ on the resolved quotient 
\[
X'=\widetilde{X/\jmath}.
\]
This is yet another K3 surface which happens to be the quadratic twist of $\Jac(f)$ at the ramified fibers.
Assume that $\Jac(f)$ is given by a minimal Weierstrass form
\begin{eqnarray*}
\label{eq:jac}
\Jac(f):\;\;\; y^2 = x^3 + a_2x^2 + a_4x+a_6, \;\; a_i\in K[t],\;\; \deg a_i\leq i,
\end{eqnarray*}
not all $a_i$ being  scalar multiples of $i$-th powers of a single linear form in $t$.
Then the quadratic twist $X'$ may be defined by
\begin{eqnarray}
\label{eq:X'}
X': \;\;\; gy^2 =x^3 + a_2x^2 + a_4x+a_6
\end{eqnarray}
where the quadratic polynomial $g\in K[t]$ encodes the location of the ramified fibers
(i.e.~it has two distinct zeroes, possibly including $\infty$ so that it would be linear in that case).
Let us check under what condition a section $P'$ on $X'$ will lead to $\tau$ being fixed point free.
Clearly, $P'$ has to be disjoint from the induced zero section $O'$,
but there is a little more to it.
Namely, in order for $P$ not to meet $O$ at the ramified fibers,
$P'$ has to meet the ramified fibers of $X'$
(of Kodaira type $I_n^*$ for some $n\in\N_0$)
at a component different from $O'$,
and in fact in one of the two far components if $n>0$.

Writing $P'$ componentwise as $P' = (\tx,\ty)$ for a priori rational functions in $t$,
all the above translates as
\[
\deg \tx = \deg \ty = 2
\]
in terms of the twisted Weierstrass equation \eqref{eq:X'},
with extra conditions entering if far components of the ramified fibers are met
or if other singular fibers are intersected non-trivially (i.e.~at different components than $O'$ as will happen frequently in our setting).

In what follows, we shall start with the rational elliptic surface $\Jac(f)$
and aim for endowing its quadratic twist $X'$ with a suitable section $P'$.
Note that this is a codimension one condition in moduli
unless $P'$ is the classical example of a two-torsion section (but in that case there will be extra ramification conditions
cutting down the number of parameters in our setting).

\section{Extremal rational elliptic surfaces}
\label{s:res}

In order to attack determining the Enriques surfaces supporting the root types from \ref{ss:31},
and thus to prove Theorem \ref{thm1},
we turn our attention to Observation \ref{obs} \eqref{it1}.
Extremal rational elliptic surfaces have been studied starting from Beauville (for the semi-stable case over $\C$, cf.~\cite{Beauville}).
In positive characteristic, the picture changes only slightly, due to Lang \cite{Lang1}, \cite{Lang2}.
More precisely, the singular fiber types may change (or the surfaces may cease to exist anymore, such as $X_{3333}$
in characteristic $3$ as we have used in \ref{ss:no-add}; compare Table \ref{T0}),
but the torsion structure of the Mordell--Weil group does not change.

The following table reproduces the classification, following the notation over $\C$ from \cite{MP}.
The Weierstrass forms are designed for our purposes with a view towards quadratic twists;
hence, in order to include equations valid in characteristic $2$, major modifications would be in order.
 Note that the
 $2$-torsion sections are always located at zeros of $f(x)$;
 here we made sure to always normalise one of them, if any over $K(t)$,  to $(0,0)$.

 \begin{table}[ht!]
 $$
 \begin{array}{|ccccc|}
 \hline
 \text{notation} & \text{Weierstrass eqn. } y^2 = f(x) & \text{sing. fibers} & \text{char}(K) & \MW\\
 \hline
 \hline
 X_{9111} & x^3+(tx+1)^2/4
 & I_9/\infty, I_1/t^3-27 & p\neq 3 
 & \Z/3\Z\\
 && I_9/\infty, II/0 & p=3 & \\
 X_{8211} & x(x^2+(t^2+2)x+1) & I_8/\infty, I_2/0, I_1/t^2+4 & p\neq 2 & \Z/4\Z\\
 X_{6321} & x(x^2+(t^2/4+t-2)x+1-t) & 
 I_6/\infty, I_3/0, I_2/1, I_1/-8 & p\neq 2,3&
 \Z/6\Z\\
 &&  I_6/\infty, I_3/0, III/1 & p=3 &\\
 X_{5511} & x^3-tx^2+((1-t)x-t)^2/4 & I_5/0,\infty, I_1/t^2-11t-1 & p\neq 5 & \Z/5\Z\\
 && I_5/0,\infty, II/3 & p=5 &\\
 X_{4422} & x(x-1)(x-t^2) & I_4/0,\infty, I_2/\pm 1 & p\neq 2 & \Z/4\Z\times\Z/2\Z\\
 X_{3333} &
 x^3 + (3tx+t^3-1)^2/4
 & I_3/\infty, t^3-1 & p\neq 3 & (\Z/3\Z)^2\\
  X_{321} & x(x^2+x+t) & III^*/\infty, I_2/0, I_1/\frac14 & p\neq 2 & \Z/2\Z\\
 X_{222} & x(x-1)(x-t) & I_2^*/\infty, I_2/0,1 & p\neq 2 & (\Z/2\Z)^2\\
 X_{141} & x(x^2+t(t+2)x+t^2) & I_4/\infty, I_1^*/0, I_1/-4 & p\neq 2 & \Z/4\Z\\
 X_{\lambda} & x(x^2+tx+\lambda t^2) \;\;\; (\lambda\in K^\times) & I_0^*/0,\infty & p\neq 2 & (\Z/2\Z)^2\\
 \hline
 \end{array}
 $$
 \caption{Extremal rational elliptic surfaces}
 \label{T0}
 \end{table}
 
 There is one surface, $X_{33}$, missing from the above table
 because it is a degeneration of $X_{321}$.
 Let us explain this briefly
 (a similar, but more abstract argument for $X_{431}$ and $X_{44}$ has appeared in \cite{RS}).
 Compared to  $X_{321}$ as given in Table \ref{T0},
 $X_{33}$ has the singular fibers of type $I_2, I_1$ replaced by a $III$ fiber.
 Both surfaces can be combined in an isotrivial family of rational elliptic surfaces:
 \[
 \mathcal X: \;\; y^2 = x(x^2+cx+t), \;\;\; c\in K.
 \]
 Here all surfaces at $c\neq 0$ are isomorphic to $X_{321}$ (since $K$ is algebraically closed)
 while $X_{33}$ sits at $c=0$.
 Recall that for our Enriques surfaces, we are going to apply quadratic base changes to extremal rational elliptic surfaces
 (Observation \ref{obs} \eqref{it1}).
 Hence the following claim will allow us to tacitly omit $X_{33}$ in the sequel. 

 \begin{claim}
The family of quadratic base changes of $X_{321}$ contains those of $X_{33}$ in its closure.
\end{claim}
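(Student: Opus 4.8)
The plan is to treat $\mX$ as a single flat family of rational elliptic surfaces over the affine $c$-line and to carry the quadratic base change along with it. By the discussion above, $\mX_c\cong X_{321}$ for every $c\neq 0$, while $\mX_0=X_{33}$; in particular $\{c\neq 0\}$ is dense in $\A^1$ with closure all of $\A^1$. A quadratic base change is encoded by a quadratic $g\in K[t]$ recording the two ramified fibres, exactly as in \eqref{eq:X'}, so applying it fibrewise yields the relative twist
\[
\mX':\;\; g\,y^2 = x(x^2+cx+t)
\]
over the parameter space $\mathcal M$ of pairs $(c,g)$. Since the Weierstrass coefficients are polynomial in $c$ and $t$, forming this twist commutes with specialising $c$: the restriction of $\mX'$ to $\{c\neq 0\}$ is the family of quadratic base changes of $X_{321}$, and its fibre over $\{c=0\}$ is the corresponding family of quadratic base changes of $X_{33}$.

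First I would pass to the dense open locus of $\mathcal M$ on which the twist $\mX'$ defines (after minimal resolution) a K3 surface, and check that there $\mX'$ is a flat family of K3 surfaces. The only delicate point is the limit $c\to 0$: the fibre of $\mX_c$ at $t=c^2/4$ (type $I_1$) collides with the fibre at $t=0$ (type $I_2$) to produce the type $III$ fibre of $X_{33}$, so one must verify that the twisted total space still admits a simultaneous resolution through $c=0$ and that the limit of the smooth K3 models is precisely the resolved twist of $X_{33}$, rather than a more degenerate surface. Because this collision takes place in the base $\PP^1_t$ away from the branch locus and the generic fibre of $\mX'$ stays a smooth rational curve, this reduces to a local computation on the minimal Weierstrass model, showing that the central fibre acquires only rational double points which are resolved within the family.

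Granting this, the claim follows from density: the flat (resolved) family $\mX'$ has general fibre a quadratic base change of $X_{321}$, and since $\{c\neq 0\}$ is dense in $\A^1$, the fibres over $c=0$ are flat limits of these, i.e.\ every quadratic base change of $X_{33}$ lies in the closure of the family of quadratic base changes of $X_{321}$ (the branch data being transported by the rescaling $t=c^2T$ that realises $\mX_c\cong X_{321}$). I expect the simultaneous-resolution step to be the genuine obstacle: one must confirm that the K3 model does not degenerate further at $c=0$, and — for the downstream application — that the section $P'$ of \eqref{eq:X'} used to build the Enriques involution extends across $c=0$, so that the degeneration stays inside the family of interest and $X_{33}$ may indeed be omitted in the sequel.
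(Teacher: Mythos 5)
Your steps 1--3 are, modulo packaging, exactly the paper's own proof. The paper writes the base change as $t=\gamma^{-1}q/r$, applies it to $X_{321}$, and rescales $(x,y)\mapsto(\gamma^{-1/2}x,\gamma^{-3/4}y)$ to land in the Weierstrass form $y^2=x(x^2+\gamma^{1/2}x+q/r)$; this is precisely your relative family with $c=\gamma^{1/2}$ (your remark that the branch data are transported by $t=c^2T$ is the same identification read backwards), and the paper's ``extending to $\gamma=0$'' is your density step at $c=0$. That you phrase things via the twist $gy^2=x(x^2+cx+t)$ rather than the honest base change is cosmetic, since the roots of $g$ and the branch points of the base change are the same data.

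Where you part ways with the paper is in what you designate as the remaining content. The simultaneous-resolution and section-extension issues that you flag as ``the genuine obstacle'' are not part of the claim, and the paper's proof contains no resolution-theoretic input at all: the claim is a statement about the family of quadratic base changes at the level of Weierstrass data, and that is also the level at which it is used afterwards --- the constructions in Sections \ref{s:eqns} and \ref{s:fam} are carried out entirely on Weierstrass models (the twisting polynomial $g$, the coordinates $\tx(P'),\ty(P')$, ramification conditions), so omitting $X_{33}$ from Table \ref{T0} only requires that its base-change data arise as limits of the $X_{321}$ data, which your steps 1--3 already establish. If one did insist on the stronger statement about smooth models, Artin's simultaneous resolution of rational double points (after a finite base change of the parameter space) would supply it; and the separate question of whether the Enriques involution survives at special members is not settled by this claim in any case, but by the case-by-case ramification analysis in Section \ref{s:fam} (compare \ref{ss} and \ref{ss:3'}). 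So your proof is correct and complete once you recognize that the step you left unproven lies outside the claim; as written, you have inverted the weight of the argument, presenting as its crux a verification the paper neither needs nor performs.
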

 
 \begin{proof}
 Let us write the base change by 
 \begin{eqnarray}
 \label{eq:bc}
 t = \gamma^{-1} \frac qr
 \end{eqnarray}
 for some quadratic polynomials $q,r\in K[s]$ and $\gamma\in K^\times$, say
 (which, of course, we could still normalize, for instance using M\"obius transformations).
 Applying \eqref{eq:bc} to $X_{321}$, we can rescale coordinates by 
 \[
 (x,y) \mapsto (\gamma^{-1/2} x, \gamma^{-3/4}y)
 \]
 to arrive at the Weierstrass form
 \[
 y^2 = x \left(x^2 +\gamma^{1/2} x + \frac qr\right).
 \]
 Extending to $\gamma=0$, we land exactly in the family of quadratic base changes of $X_{33}$.
 \end{proof}

 \section{One-dimensional families}
 \label{s:fam}
 
 We now come to the root types from \ref{ss:31},
 aiming to prove their existence and eventually deduce 
Theorem \ref{thm}.
The arguments proceed mostly as in characteristic zero (except for a few degenerations in characteristics $3,5$
which we shall point out below). 
In particular, we have the same critical result for the root lattice $R_0$:

\begin{Proposition}
\label{prop:curves}
If the elliptic fibration arises from the data in Table \ref{T1},
then all curves representing $R_0$ are taken to fiber components
by a suitable composition of reflections. 
\end{Proposition}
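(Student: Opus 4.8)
The plan is to reduce everything to the elementary principle that on an elliptic surface an \emph{irreducible} curve meeting the generic fibre trivially is forced to lie in a fibre, and then to use the lattice bookkeeping of Sections \ref{s:latt}--\ref{s:ell} to arrange, by Picard--Lefschetz reflections, that the curves representing $R_0$ become precisely such irreducible vertical curves.

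First I would fix the fibration $f\colon Y\to\PP^1$ attached to $|2E|$ with $E$ as in Table \ref{T1}, and write $C_1,\dots,C_9$ for the smooth rational curves realising the vertices of $R$, where $e=C_9$ is the distinguished vertex building $E$ and $C_1,\dots,C_8$ realise $R_0=R\setminus\{e\}$. Since $R_0=R\cap E^\perp$, the eight classes $C_1,\dots,C_8$ are orthogonal to $E$. Passing from $E$ to the genuine half-fibre class $F$ means subtracting base locus, which by the proof of Corollary \ref{cor:ell} is carried out by a product $\phi$ of reflections in smooth rational curves; reflections preserve the pairing, so $\phi(C_i).F=0$ for $i\le 8$. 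At this point the content splits into an easy half and a hard half: if $\phi(C_i)$ happens to be represented by an irreducible curve it is automatically a fibre component, since $F$ is nef and $\phi(C_i).F=0$ forces verticality. More generally, by Corollary \ref{cor:ell} each $\phi(C_i)$ is an effective or anti-effective $(-2)$-divisor supported on smooth rational curves, and an \emph{effective} such divisor orthogonal to the nef class $F$ is necessarily a non-negative combination of fibre components.

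The substance is therefore to produce reflections landing each $C_i$ on a single fibre component. Here I would feed in extremality of $\Jac(f)$ (Corollary \ref{cor:ell}): by the Shioda--Tate formula the fibre components orthogonal to the bisection span a negative-definite root lattice $R_{\mathrm{fib}}$ of rank $8$, and the fibre analysis of Criterion \ref{crit-e} identifies the configuration as the $\tilde R_v$ attached to the orthogonal summands of $R_0$, so that $R_{\mathrm{fib}}$ is isometric to $R_0$ and its simple roots are honest fibre components. Both $\{\phi(C_i)\}$ and the simple fibre components are then bases of one and the same rank-$8$ root system inside $F^\perp$; since the Weyl group of that system is generated by the reflections in its simple roots, and those simple roots are the \emph{effective} fibre components (hence legitimate Picard--Lefschetz reflections), I can realise the unique Weyl element carrying the base $\{\phi(C_i)\}$ onto the base of fibre components as a further composition of geometric reflections. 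Composing with $\phi$ gives the desired reflections taking $C_1,\dots,C_8$ to fibre components.

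The step I expect to be the main obstacle is reconciling the two rank-$8$ root lattices: one must ensure that the primitive span of $\phi(R_0)$ actually coincides with $R_{\mathrm{fib}}$, rather than sitting inside it with finite index, for otherwise a base of $\phi(R_0)$ would consist of proper combinations of fibre components rather than of the components themselves. This is exactly where Lemma \ref{lem:no} is indispensable: it guarantees that $\phi(R_0)\hookrightarrow F^\perp$ is primitive, so that no spurious $(-2)$-class can intervene; together with the rank count forced by extremality and the fibre type predicted by Criterion \ref{crit-e}, this forces the equality $\phi(R_0)=R_{\mathrm{fib}}$ and leaves the reflected curves no option but to coincide with the simple fibre components.
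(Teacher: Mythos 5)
Your proposal rests on the claim that Criterion \ref{crit-e} identifies the fibre configuration of $|2E|$ as $\bigoplus_v \tilde R_v$ for the orthogonal summands $R_v$ of $R_0$, so that your lattice $R_{\mathrm{fib}}$ is isometric to $R_0$ and the two bases are Weyl-conjugate. This is exactly what fails in general, and the paper warns about it explicitly just before stating the criterion (``this will indeed often be the case, but not always -- leading to different moduli components, in fact''). The hypothesis of Criterion \ref{crit-e} -- \emph{fewer than two} curves of $R_0$ meeting $e$ -- is violated for many rows of Table \ref{T1}: for $A_9$ the vertex $e=a_2$ meets both $a_1$ and $a_3$, and the same happens for $A_8+A_1$, $A_7+A_2$, $A_7+2A_1$, $A_5+A_3+A_1$, $3A_3$, $D_6+A_3$, among others. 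Precisely in these cases other fibre configurations occur (Table \ref{T3}): e.g.\ for $R=A_7+2A_1$ the lattice $R_0=2A_3+2A_1$ can be supported on fibres $I_2^*,2I_2,I_2$ (affine types $\tilde D_6+2\tilde A_1$) or $III^*,2I_2$ ($\tilde E_7+\tilde A_1$). There the root lattice generated by fibre components is $D_6+2A_1$ resp.\ $E_7+A_1$, which strictly contains $2A_3+2A_1$ with finite index and is not isometric to it; hence no Weyl element can carry the base $\{\phi(C_i)\}$ onto ``the base of fibre components'', and your concluding step -- that Lemma \ref{lem:no} forces $\phi(R_0)=R_{\mathrm{fib}}$ -- is simply false: the equality genuinely fails in these components. (There is no contradiction with Lemma \ref{lem:no}, because the glue vectors of, say, $2A_3\subset D_6$ involve the half-fibre class, which does not lie in the $\Q$-span of $\phi(R_0)$, so the saturation of $\phi(R_0)$ in $\Num(Y)$ is not the component lattice.) The proposition nevertheless holds in these cases because $R_0$ sits inside the \emph{affine} diagrams vertex-wise, e.g.\ $2A_3$ is the vertex set of $\tilde D_6$ minus its central node; proving that the curves can be reflected onto such vertex-induced subdiagrams is exactly the case-by-case configuration analysis the paper carries out (compare the $\tilde D_6+2\tilde A_1$ and $\tilde E_7+\tilde A_1$ arguments in the proof of Proposition \ref{lem:no-add}, and the discussion in \ref{ss}), and which your uniform argument does not replace.

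A secondary but real flaw is your use of Shioda--Tate: the rank-$8$ statement concerns the Jacobian with its zero section, whereas on $Y$ the curves of $R_0$ may meet the bisection $B$ (this happens whenever $e$ has a neighbour in $R_0$, i.e.\ in most rows of Table \ref{T1}), so the fibre components orthogonal to $B$ can span a lattice of rank strictly less than $8$ -- already in the prototype $R=A_5+2A_2$ of \ref{ss} that rank is $7$, since the $A_1$-curve of $R_0$ meets $B$. Finally, even where Criterion \ref{crit-e} does apply, it determines the fibre types only up to multiplicities; the multiple-fibre components (rows a, c, e, g, i of Table \ref{T3}, cf.\ \ref{ss:2nd}) are invisible to your argument, although for the statement of the proposition this last point is harmless. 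In short: your first two reduction steps (verticality of the $\phi(C_i)$ via nefness of $F$, effectivity control via Corollary \ref{cor:ell} and Lemma \ref{lem:no}) agree with the paper, but the core of the proposition lies in the configurations your argument excludes by fiat, and there it does not go through.
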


The proof of the proposition consists of a tedious case-by-case analysis
for each root type covering all possible fiber configurations including multiplicities
(and often using the obstructions posed by Lemma \ref{lem:no}).
Since the arguments are combinatorial in nature, essentially only using elementary properties of root lattices and Weyl groups,
they completely parallel the characteristic zero setting (which was sketched to some extent in \cite{S-Q-hom}).
We omit the details for brevity (except for a quick pointer in \ref{ss}).

In particular, Proposition \ref{prop:curves} allows us to read off 
how the smooth rational bisection $B$ 
intersects the singular fibers (depending on the multiplicities).
This translates into the configuration of $P'$
and thus yields explicit conditions for the Weierstrass form in \eqref{eq:X'}.
Spelling out these conditions, we can solve directly for the resulting families (see e.g.~\ref{ss}, \ref{ss:2nd})
and obtain the following result (paralleling the characteristic zero case).

 \begin{Theorem}
 \label{thm1}
 Let $Y$ be an Enriques surface supporting a root lattice $R$ of rank $9$.
Then $Y$ appears in Table \ref{T2'} or \ref{T2}.
 \end{Theorem}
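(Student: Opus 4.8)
The plan is to establish Theorem~\ref{thm1} by converting the lattice-theoretic data already compiled in Table~\ref{T1} into explicit Weierstrass equations, working through the 31 root types from~\ref{ss:31} essentially one by one. The starting point is Corollary~\ref{cor:ell}, which tells me that any Enriques surface $Y$ supporting a root type $R$ of rank $9$ carries an elliptic fibration whose jacobian $\Jac(f)$ is an \emph{extremal} rational elliptic surface, together with a smooth rational bisection $B$. Combined with Proposition~\ref{prop:curves}, which guarantees that all curves representing the rank-$8$ sublattice $R_0$ are carried to fiber components by reflections, this means the singular fibers of $\Jac(f)$ are forced (up to the $I_2/III$ and $I_3/IV$ ambiguity and the multiplicities flagged in Criterion~\ref{crit-e}). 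So the first step is, for each root type, to identify which surface from Table~\ref{T0} the jacobian must be, reading off the fiber configuration $R_0$ from the last column of Table~\ref{T1}.

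The second step is to realize the base-change construction concretely. By the results of Section~\ref{s:bc}, $X=$ the K3 cover is a quadratic base change of $\Jac(f)$, and by Observation~\ref{obs} the relevant section $P$ descends to a section $P'$ on the quadratic twist $X'$ defined in~\eqref{eq:X'}. Starting from the minimal Weierstrass form of $\Jac(f)$ taken from Table~\ref{T0}, I would introduce the twisting polynomial $g$ (a quadratic encoding the two ramification points) and seek a section $P'=(\tx,\ty)$ with $\deg\tx=\deg\ty=2$, subject to the extra incidence conditions dictated by how $B$ meets the singular fibers. Here Proposition~\ref{prop:curves} is doing the essential work: it specifies precisely which fiber component $B$ (equivalently $P'$) hits at each singular fiber, so the conditions on $P'$ are completely determined by the combinatorics, not guessed. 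These conditions translate into polynomial equations in the coefficients of $\tx$, $\ty$ and in the parameters of $g$ and $\Jac(f)$.

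The third step is simply to solve these equations. Because the incidence conditions typically impose one more constraint than the available moduli of the twisting and the section, the solution set is one-dimensional, and one reads off an explicit parametrization of the family; these are exactly the entries recorded in Tables~\ref{T2'} and~\ref{T2}. The non-torsion case is a codimension-one condition in moduli (as noted at the end of Section~\ref{s:eqns}), while in the torsion cases the extra ramification conditions cut the parameter count down to the same dimension. The outcome of solving is that every Enriques surface supporting $R$ of rank $9$ does appear in one of the two tables, which is the assertion of the theorem.

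The \textbf{main obstacle} is the characteristic-dependent degenerations in characteristics $3$ and $5$. The Weierstrass forms in Table~\ref{T0} already change singular-fiber type there (for instance $X_{9111}$ acquires a type $II$ fiber, and $X_{3333}$ ceases to exist in characteristic $3$), so for the root types whose jacobian is one of these surfaces the incidence analysis and the resulting equations must be redone, and in a few cases the family degenerates or the root type fails to exist altogether (as flagged in~\ref{ss:3'} and~\ref{ss:5}). Handling these exceptional primes carefully---rather than appealing to the characteristic-zero arguments from \cite{HK} and \cite{Kondo-Enriques} that are unavailable here---is where the real care is required; the generic-characteristic computations, by contrast, parallel \cite{S-Q-hom} closely and I would only sketch a representative case (as in~\ref{ss}) and relegate the rest to the tables.
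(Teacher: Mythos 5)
Your proposal follows essentially the same route as the paper: use Corollary \ref{cor:ell} and the data of Table \ref{T1} to pin down the extremal rational jacobian, invoke Proposition \ref{prop:curves} (and Criterion \ref{crit-e}) to fix how the bisection meets the fibers, translate this into incidence conditions on a degree-two section $P'$ of the quadratic twist \eqref{eq:X'}, solve the resulting discriminant equations case by case to obtain the one-dimensional families of Tables \ref{T2'} and \ref{T2}, and treat the multiple-fiber/torsion configurations and the characteristic $3$ and $5$ degenerations separately. This matches the paper's proof, which likewise reduces to the characteristic-zero computations of \cite{S-Q-hom} in the generic cases and spells out only the representative example \ref{ss}, the second components \ref{ss:2nd}, and the degenerations \ref{ss:3'}, \ref{ss:5}.
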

 
 The notation in Tables \ref{T2'} and  \ref{T2} should be clear
from the previous sections: For each root type, we list the Jacobian
of the elliptic fibration induced by the isotropic vector $E$ from Table \ref{T1},
and indicate the quadratic twist \eqref{eq:X'} by $g$ together with the $x$-coordinate $\tx(P')$ of the section $P'$
leading to the Enriques involution $\tau$ on the K3 cover $X$.

 \begin{table}[ht!]
 $$
 \begin{array}{|clccc|}
 \hline
\#& \text{root type} & {\Jac(|2E|)} & \text{section:} \, \tx(P') & \text{quadr.~twist:} \, g\\
 \hline
1 & A_9 & X_{8211} & \mu t -1 & (\mu t-1) (\mu t +\mu^2-1)\\
2 & A_8 + A_1 & X_{6321} & \mu t + 1 & (\mu t+1)(\mu t + (2\mu+1)^2)\\
& \text{ also } \#27 &&&
\\
3 & A_7+A_2 & X_{6321} & \mu(t-1)
&
\mu t^2+4\mu (\mu+1) t-4(\mu+1)^2
\\
& \text{ also } \#28 &&&\\

\hline
\end{array}
$$
 \caption{Explicit data for the 31 maximal root types}
 \label{T2'}
\end{table}

 \begin{table}[ht!]
 $$
 \begin{array}{|clccc|}
 \hline
\#& \text{root type} & {\Jac(|2E|)} & \text{section:} \, \tx(P') & \text{quadr.~twist:} \, g\\
 \hline
4 & A_7+2A_1 & X_{4422} & \mu t & (\mu t-1)(t-\mu)\\
& \text{ also }\#27, \#29 &&&\\
5 & 
A_6+A_2+A_1 & X_{6321} &
-(ut-4u^2+3u-1)/(u^2(u+1))
&\\
&&&\multicolumn{2}{r|}{((u^2+u)t-(u-1)^2)(ut-4u^2+3u-1)}\\
6 & 
A_5+A_4 & X_{6321} & (t-1)(\mu t-1) & (\mu t-1)(\mu(4\mu+1)t-(2\mu+1)^2)\\
7 &
A_5+A_3+A_1 & X_{4422} &
\mu t+1-\mu & 
(\mu t+1-\mu)(t+1-\mu)
\\
& \text{ also } \#30
&&&\\
8^* & A_5+2A_2 & X_{6321} & 27(t+u)(t-1)/((u-8)(u+1)^2) &
(t+u)(3(u+4)t-(u-2)^2)\\
& \text{ also } \#28 &&&\\
9^* & A_5+A_2+2A_1 & X_{6321} & 27(t-\lambda)^2/((\lambda-1)(\lambda+8)^2) &\\
&&&\multicolumn{2}{r|}{
3(\lambda+11)t^2-4(\lambda^2+13\lambda+4)t+4(\lambda-4)^2}
\\

10^{\dagger} & 2A_4+A_1 & X_{5511} &
\multicolumn{2}{l|}{-(s^5t^2-s^3(11s^2-15s+5)t-(s-1)^5)/(5s^2-5s+1)^2}
\\
&&&\multicolumn{2}{r|}
{s^2(4s-1)t^2-2(2s-1)(11s^2-11s+2)t-(4s-3)(s-1)^2}
\\
11 & A_4+A_3+2A_1 & X_{4422} &
4\mu(t+\mu) &
(t+\mu) 
(4\mu t-4\mu^2-1)\\
 12 & 3A_3 & X_{4422} & \mu t^2-\mu+1
& \mu t^2-\mu+1 \\
& \text{ also } \#16, \#19 &&& \\

13 & 
2A_3 + A_2 + A_1 & X_{4422} & (\mu t-\mu+1)^2 & 
(\mu t-\mu+2)(\mu t+t-\mu+1)
\\
14 & 2A_3+3A_1 & X_{4422} &
-(t^2+4\mu t+1)/(4\mu^2-1)&
t^2+4\mu t+1
\\
15^* &
A_3+3A_2 
& X_{3333} & \multicolumn{2}{l|}{(t-1)((s^3+6s^2+9s+3)t+s^3+3s^2-3)/s^2(2s+3)}\\
&&&\multicolumn{2}{r|}{((6s+3)(s+1)^2t^2+(8s^3+12s^2-6s-6)t+4s^3-6s+3)}\\

16 &
D_9 & X_{9111} & \mu & 
\mu^2t^2+2\mu t+4\mu^3+1
\\
17 &
D_8+A_1 & X_{8211} & \mu & \mu t^2 + (\mu+1)^2
\\
& \text{ also } \#27 & X_{8211} & 0 & t-\mu\\
18 &
 D_7+2A_1 & X_{222} & t+\mu & (t+\mu)(t+\mu-1)
 \\
 19 &
 D_6+A_3 & X_{222} & (\mu t+1-\mu)t & (\mu t+1-\mu)(\mu t+1)\\
 & \text{ also } \#16 &&&\\
 20 &
 D_6+A_2+A_1 & X_{222} & -4\lambda t(\lambda t-1)
 & (\lambda t-1)(4\lambda^2 t-4\lambda+1)
 \\
 21 &
 D_6 + 3A_1 & X_{222} & 0 & t(t-\lambda)\\
 22 &
 D_5+A_4 & X_{5511} & \multicolumn{2}{r|}{ \mu\; \, \;\;\;\;\;\;\;\;\;\;\;
 (\mu+1)^2t^2-2\mu(3\mu+1)t+\mu^2(4\mu+1)}
 \\
 23 &
 D_5 + A_3 + A_1 
 & X_{4422} & 1 & (t-1)(t-\lambda)
 \\
 24 &
 D_5+D_4 & X_{141} & \mu & (\mu+1)t^2+2\mu t + \mu^2\\
 25 &
 D_4+A_3+2A_1 & X_{4422} &\mu & t^2-\mu\\
 & \text{ also } \#18 & X_{4422} & 0 & t-\lambda\\
 26 &
  2D_4+A_1 & \multicolumn{2}{l}{\;X_{-4\mu(4\mu+1)} \;\;\;\;\;\;\;\;\;\;\;\;\;\;\;\;\; (t+\mu)^2} & t^2+(6\mu+1)t+\mu^2
 \\
 27 &
 E_8+A_1 & X_{321} & 0 & t(t-\lambda)\\
 28 &
 E_7+A_2 & X_{6321} & 0 & t-\lambda\\
 29 &
 E_7+2A_1 & X_{222} & 0 & t(t-\lambda)\\
 30 & E_6+A_3 & X_{141} & (\mu t-1)t & (\mu t-1)(\mu(\mu+1) t-1)
 \\
 31^* &
 E_6+A_2+A_1 & X_{6321}  & 0 & (t-1)(t-\lambda)
 \\
 \hline
 \end{array}
 $$
 \caption{Explicit data for Enriques surfaces supporting the 31 maximal root types
 (those not existing in char.~$3$ being starred  resp. daggered  for char.~$5$)} 
 \label{T2}
 \end{table}

Since the proof of Theorem \ref{thm1} follows  the same lines as in characteristic zero,
we shall only comment on the degenerate cases (compare \cite[\S 9]{S-Q-hom}).

%
%
%
%
%
%
%
%
%

 \subsection{$\boldsymbol{R=A_5+2A_2}$}
 \label{ss}
 
Given an Enriques surface $Y$ supporting the root type $R=A_5+2A_2$, we work with the primitive isotropic vector $E$
from Table \ref{T1}. Then $R_0=A_5+A_2+A_1$, and $\Jac(|2E|)=X_{6321}$ by \ref{ex2}.

We now study the smooth rational bisection $B=\sigma(a_1)$ induced by the vertex $a_1\in A_2$
whose dual vector is involved in $E$.
By definition, $B$ meets the $\tilde A_1$ fiber in two different components (unless it is ramified),
and all other reducible fibers in a single component 
-- the one which is not contained in the support of the image of the orthogonal summands $A_5$ and~$A_2$ of $R_0$
under the reflection $\sigma$.
Hence $\sigma(A_5)$ is exactly supported on five components of the $I_6$ fiber,
and further reflections take each $(-2)$-divisor precisely to a fiber component (as stated in Proposition \ref{prop:curves}).
The same reasoning applies to $\sigma(A_2)\subset\tilde A_2$,
and we conclude that the possible intersection patterns are exactly those depicted in Figure \ref{Fig:R}.

In order to pour this set-up into equations as indicated in Section \ref{s:eqns},
we first assume that no reducible fiber is multiple.
Hence we aim for a quadratic twist $X'$ of $X_{6321}$
at 
some quadratic polynomial $g$ without roots at $t=0,1,\infty$
such that there is a section $P'$ compatible with the  intersection pattern of $B$.
That is, we require $P'$ to meet the non-identity component of the $\tilde A_1$ fiber at $t=1$;
on the Weierstrass model \eqref{eq:X'},  $P'$ thus goes through the node at $(x,y,t)=(0,0,1)$.
In terms of the coordinates of $P'=(\tx,\ty)$, this means
\[
(t-1) \mid \tx, \ty.
\]
Setting $\tx=(t-1)(\mu t+\lambda)$, the right-hand side of \eqref{eq:X'} evaluates as
\begin{eqnarray}
\label{eq:H}
(t-1)^2 H_{\lambda,\mu}(t)
\end{eqnarray}
for some degree 4 polynomial $H\in K[\lambda,\mu][t]$ when substituting $x=\tx$.
For the section $P'$ to lie on $X'$ then simply requires
that \eqref{eq:H} factors as $g\ty^2$,
i.e. $H$ admits a square factor (agreeing with $\ty$),
and the residual quadratic polynomial has to be exactly $g$ (which is not a square!).
Put differently, the non-zero polynomial $H$ determines $\ty$ and $g$ as soon as its discriminant  vanishes.
But the discriminant of $H$ with respect to $t$ is readily computed:
\[
\Delta(H,t) = \mu^6 (\lambda +1)^3(\lambda ^3-6\lambda ^2\mu-15\lambda \mu^2-8\mu^3-27\mu^2).
\]
If $\mu=0$, then $P'$ meets the $I_6$ fiber at $t=\infty$ at a non-identity component,
but $B$ meets twice the same component, contradiction.
Similarly, if $\lambda=-1$, then $P'$ meets the $I_3$ fiber at $t=0$ at a non-identity component,
giving the same kind of contradiction.
It thus remains to investigate the cubic factor
which we transform to the following shape in auxiliary parameters $u,v$ with $\mu=1/v, \lambda=u\mu$:
\begin{eqnarray}
\label{eq:uv}
u^3-6u^2-15u-27v-8.
\end{eqnarray}
If char$(K)\neq 3$, then solving \eqref{eq:uv} for $v$ directly leads to
\[
\tx = 27(t+u)(t-1)/((u-8)(u+1)^2), \;\;\;
g = (t+u)(3(u+4)t-(u-2)^2)
\]
as recorded in Table \ref{T2}.
In conclusion,  the base change construction from Section \ref{s:bc} produces
a one-dimensional family of Enriques surfaces, parametrized by $u$ (with a few exceptional values, see \ref{ss:2nd}),
supporting the root type $R$.

In contrast, if char$(K)=3$, then \eqref{eq:uv} factors as $(u+1)^3$, i.e.~$\lambda=-\mu=\nu^3$,
giving
\[
\tx = \mu (t-1)^2, \;\;\; g = (t-1)((\nu^2+\nu) t+2(\nu^2+\nu+1)).
\]
Note, however, that the fiber at $t=1$ degenerates to Kodaira type $III$ in characteristic $3$ (see Table \ref{T0}).
With $g$ as above, this would be multiple on $Y$, contradiction.
In other words, the involution $\tau$ on $X$ (which is still well-defined as before)
ceases to be fixed-point free,
and the base change construction involving $P', g$ etc. does not produce Enriques surfaces in characteristic $3$ anymore,
but rather Coble surfaces.

%
%
%


\subsection{Second moduli component}
\label{ss:2nd}

What may come as a surprise at first is that we are not yet done with the given root type, in fact.
Namely, we still have to take multiple fibers into account.
For instance, ramification at the $I_3$ fiber visibly occurs at $u=0,2$ within the above family,
and $\tau$ stays fixed-point free exactly at the  CM point $u=2$.
For the $I_2$ fiber, we similarly have to exclude $u=-1$ and $u=8$.
However, the situation changes drastically if we allow for (or, in fact, impose abstractly) ramification at the $I_6$ fiber:
here the section $P$ is forced to meet the opposite component $\Theta_6$ of the resulting $I_{12}$ fiber on $X$.
Hence the theory of Mordell--Weil lattices \cite{ShMW} shows that $P$ has height zero,
i.e.~it is a torsion section, and in fact of order $2$.
But then $P=(0,0)$ is already defined on $X_{6321}$,
so substitution into \eqref{eq:X'} returns zero identically.
Hence, imposing the quadratic base change to ramifiy at $\infty$ yields a second family of Enriques surfaces $Y'$  
originating from 
\begin{eqnarray}
\label{eq:2nd}
P'=(0,0), \;\;\; g=t-\lambda \;\;\; (\lambda\in K\setminus\{0,-8\}).
\end{eqnarray}
The main difference to the previous case is that $Y'$ even supports the root type $E_7+A_2$,
i.e. the primitive closure $R'$ of $R$ (as one easily verifies in Figure \ref{Fig:R}).
Obviously this construction works in any characteristic (it can even be adjusted to characteristic $2$
as we  explore in \cite{S-2}).

\begin{Remark}
We shall verify in Section \ref{s:comp}
that the two families of Enriques surfaces constructed (outside characteristic $3$)
are in fact distinct.
\end{Remark}

\subsection{Further degenerations in characteristic $3$}
\label{ss:3'}

We have already seen in \ref{ss} that the data under \#8 degenerate in characteristic $3$
(but by \ref{ss:2nd}, the given root type is still supported on the family filed under \#28).
There are three more root types which degenerate, and in fact cease to exist on Enriques surfaces in characteristic $3$.

For \#15, 
this should be clear, since by Criterion \ref{crit-e},
the root lattice $R_0=4A_2$ from Table \ref{T1} implies that $|2E|$ would have four fibers
of Kodaira type $I_3$.
However, such a surface does not exist in characteristic $3$
(
as we have already used in \ref{ss:no-add}, compare Table \ref{T0}).

Meanwhile for \#31, this follows directly from the set-up from Table \ref{T1} as we shall sketch briefly.
By Criterion \ref{crit-e}, $\Jac(|2E|)=X_{6321}$,
and either the $I_6$ fiber is ramified or the bisection $B$ meets it in opposite components.
Consider the sections $O,P$ on the K3 cover $X$ which $B$ splits into.

\begin{claim}
$P=Q$ where $Q=(0,0)$ denotes the two-torsion section.
\end{claim}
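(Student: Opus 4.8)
The plan is to run the same height argument as in \ref{ss:2nd}: force $P$ to meet a far fibre component, deduce that $P$ is a torsion section, and then pin its order down to $2$. First I would record how $B$ --- and hence the split sections $O,P$ on the K3 cover $X$ --- meet the reducible fibres. Since $\Jac(|2E|)=X_{6321}$ and the omitted vertex is $e=e_1\in E_6$, the bisection $B=\sigma(e_1)$ attaches to the central curve of the $A_5$-chain carried by the $I_6$ fibre, while the summands $A_2$ and $A_1$ are orthogonal to $e_1$ and are therefore met by $B$ only along identity components. By the dichotomy already noted, either the $I_6$ fibre is ramified, in which case it becomes an $I_{12}$ fibre on $X$ and $P$ meets the component $\Theta_6$ diametrically opposite $O$, or else $B$ meets $I_6$ in two opposite components, so that on $X$ the section $P$ meets the component $\Theta_3$ opposite $O$.

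Next I would compute the height of $P$ on $X$ by Shioda's formula \cite{ShMW}. As $O$ and $P$ are disjoint (they are exchanged by the Enriques involution $\tau$), one has $h(P)=2\chi(X)-\sum_v\mathrm{contr}_v(P)=4-\sum_v\mathrm{contr}_v(P)$, using $\chi(X)=2$. The far-component intersection at the $I_6$ (resp.\ $I_{12}$) fibre accounts for the bulk of the correction terms, and collecting the contributions of the remaining fibres --- exactly as in the prototype computation of \ref{ss:2nd} --- yields $\sum_v\mathrm{contr}_v(P)=4$, whence $h(P)=0$. Thus $P$ is torsion.

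It remains to identify $P$ with $Q$. Here I would argue that the image of the torsion section $P$ in the component group $\Z/6\Z$ of the $I_6$ (resp.\ $I_{12}$) fibre has the same order as $P$, and that this image is the unique element of order two because $P$ meets the component opposite $O$; hence $P$ has order $2$. As $Q=(0,0)$ is the single element of order two in $\MW(X_{6321})=\Z/6\Z$ and is carried along by the base change, we conclude $P=Q$. This is corroborated by the anti-invariance $\imath^{*}P=-P$ of Observation \ref{obs}\,\eqref{it2}, which for a $2$-torsion section is automatically satisfied. The main obstacle is the height computation itself: one has to track how the quadratic base change transforms each singular fibre of $X_{6321}$ in both ramification scenarios and verify that the correction terms sum to exactly $2\chi(X)=4$, and one must rule out the a priori possibility that the base change produces additional $2$-torsion sections competing with $(0,0)$, so that the order-two section meeting the far component is genuinely unique.
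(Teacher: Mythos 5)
There is a genuine gap in your height computation, and it sits exactly at the point the paper's proof is designed to avoid. With your own (correct) description of the intersection pattern, the correction terms do \emph{not} sum to $4$: the two $I_6$ fibers (resp.\ the $I_{12}$ fiber in the ramified case) contribute $2\cdot\tfrac{3\cdot 3}{6}=3$ (resp.\ $\tfrac{6\cdot 6}{12}=3$), while the fibers carrying $A_2$ and $A_1$ contribute $0$, because --- as you yourself note --- $B$ meets them only in the component orthogonal to $\sigma(A_2)$ and $\sigma(A_1)$, i.e.\ the identity component upstairs. So your data give $h(P)=4-3=1$, not $0$, and $P$ is not torsion on this reckoning. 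The missing correction term of $1$ would come precisely from the $\tilde A_1$ fiber being ramified, so that $P$ meets the far component of the resulting $I_4$; but that ramification is not available as an input --- it is the \emph{conclusion} the paper draws after the claim is established (``we infer ramification at the $I_2$ fiber''), and in characteristic $3$ it is impossible because the fiber has type $III$, which is exactly what rules out \#31 there. Your argument is therefore circular: to get $\sum_v\mbox{contr}_v(P)=4$ you must already know how $P$ meets the $\tilde A_1$ fiber, which is equivalent to knowing $P=Q$. Note also that the prototype computation of \ref{ss:2nd} does not transfer: for $R=A_5+2A_2$ the bisection comes from a vertex \emph{adjacent} to the $A_1$ summand of $R_0$ (both lie in one $A_2$ of $R$), so there $B$ meets both components of the $\tilde A_1$ fiber and the extra $1$ is present; for $R=E_6+A_2+A_1$ the summand $A_1$ is orthogonal to $e_1$, and that term is absent.

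The paper's proof sidesteps all of this by computing the height \emph{pairing} $\langle P,Q\rangle$ rather than the height of $P$: assuming $P\neq Q$, the single fact that both $P$ and $Q$ meet the far component of the $I_6$ fibers (resp.\ of $I_{12}$) already gives $\langle P,Q\rangle \le 2-(P.Q)-3\le -1$, while $Q$ being torsion forces $\langle P,Q\rangle=0$ --- a contradiction requiring no information whatsoever about the remaining fibers, and no discussion of possible extra $2$-torsion in $\MW(X)$. If you wanted to salvage a direct argument along your lines, you would have to split into cases according to ramification at the $\tilde A_1$ fiber, which amounts to redoing the analysis that the paper carries out \emph{after} the claim; pairing against the known torsion section is the efficient route.
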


\begin{proof}
Otherwise we may compute the height pairing between $P$ and $Q$ following \cite{ShMW}.
Since both meet the $I_6$ fibers (or the $I_{12}$ fiber in case of ramification)
in the same components (opposite the identity component met by $O$),
we obtain
\[
\langle P,Q\rangle = 2 - (P.Q) - 3 -\hdots \leq -1.
\]
But since $Q$ is torsion, the pairing is zero identically, contradiction.
\end{proof}
It follows that $B$ (just like $P$) meets the $\tilde A_1$ fiber in both components
unless it ramifies.
Presently, however, the Kodaira type is $III$ by Table \ref{T0} (since we are in characteristic $3$),
so there cannot be ramification.
But then the orthogonal summand $A_1$ embedding into $\tilde A_1$ cannot be perpendicular to $B$,
contradiction.
(In characteristic $p>3$, we infer  ramification at the $I_2$ fiber which exactly gives the family in Table \ref{T2}.)

It remains to discuss \#9.
Here the isotropic vector $E$ from Table \ref{T1}
leads to $\Jac(|2E|)=X_{6321}$, again by Criterion \ref{crit-e},
with smooth rational bisection 
meeting each singular fiber in a single component.
Searching for the corresponding section $P'$ on the quadratic twist $X'$
simplifies substantially if we take into account that $X'$ (just like $X_{6321}$) admits a 2-torsion section.
With the given data, this means (just like in \cite[8.1.4]{S-Q-hom}) that 
\begin{itemize}
\item
either $\tx(P')$ is a square in $k[t]$,
\item
or $\tx(P')$ is a scalar multiple of $g$.
\end{itemize}
In the first case, write $\tx(P')=\mu(t-\lambda)^2$.
Substitution into \eqref{eq:X'} yields a degree 4 polynomial in $k[t]$
which we require to split off a square factor.
Therefore~its discriminant vanishes
which one computes as
\begin{eqnarray}
\label{eq:ml}
(\lambda^3\mu+15\lambda^2\mu+48\lambda\mu-64\mu-27)(\lambda^2\mu-1)^3\mu^2
\end{eqnarray}
We first investigate the vanishing of  the first factor. 
If char$(k)\neq 3$, then we can solve for $\mu$ and obtain the  solution displayed in Table \ref{T2}.
However, if char$(k)=3$, then the first factor degenerates to
$\mu(\lambda-1)^3$.
Here $\lambda=1$ implies that the fiber at $t=1$ is multiple;
but the fiber is additive of type $III$ by Table \ref{T0}, contradiction.

We now investigate  the other two factors of \eqref{eq:ml}.
If $\lambda^2\mu=1$, then $P'$ meets the fiber at $t=0$ non-trivially.
By assumption, the fiber thus has to ramify
which forces $\lambda=4$ and locates this special member in the above family
(except that in characteristic $3$ it would cause the fiber at $t=1$ to ramify as well,
leading to the same contradiction as above).
Finally, if $\mu=0$, then $P$ is automatically two-torsion,
meeting the fibers at $t=1$ and $\infty$ non-trivially.
Hence both fibers have to ramify (which is only possible outside characteristic $3$ as before).
This surface can be located in the above family at $\infty$
(as can be seen by parametrizing for the section to take the shape $\tx(P')=(\mu t+\lambda)^2$, for instance).

%
%
In the second case where $\tx(P')$ encodes the ramified fibers,
we can directly solve for the residual degree 4 factor of the RHS of \eqref{eq:X'}
after substituting to form a perfect square -- this turns out to be impossible
unless $P'$ meets the fiber at $t=0$ non-trivially, i.e.~$\tx(P')$ has constant coefficient $1$.
But then, as $\tx(P')$ encodes the ramified fibers,
the fiber at $t=0$ cannot ramify,
so $B$ will meet different fiber components, contradiction.

In summary, for each of the above root types (\#9, 15, 31),
we find exactly the families in Table \ref{T2}, but only outside characteristic $3$.

\subsection{Degeneration in characteristic $5$}
\label{ss:5}

The analysis of the root type $R=2A_4+A_1$ is similar to the last case,
but considerably more involved since there are no torsion sections on $X'$ to assist us.
For starters, Criterion \ref{crit-e} applies to the data in Table \ref{T1}
to show that $\Jac(|2E|)=X_{5511}$ and every fiber is met in a single component by the smooth rational bisection $B$.
Write
\[
\tx(P') = \nu t^2 +\mu t+\lambda
\]
and substitute into \eqref{eq:X'}.
Then the RHS polynomial $r$ has to split of the square of a quadratic polynomial (which will give $\ty(P')$).
Hence not only does the discriminant $\Delta(r,t)$ of $r$ with respect to $t$ vanish,
but also its discriminant (with respect to either coefficient of $\tx(P')$).
In detail, we have
\[
\Delta(r,t) = \nu^5\lambda^5 r_1.
\]
Since the first two factors correspond to the section $P'$ meeting the fiber at $t=\infty$ resp.~$t=0$
non-trivially, our assumptions  force the respective fiber to ramify. 
This leads to two isolated Enriques surfaces outside characteristics $3, 5$ which 
will be located in the family to be developed below at $s=0$ resp.~$s=1$.

We  continue  with the discriminant of the third factor $r_1$
which reads
\[
\Delta(r_1,\lambda) = \nu^5(\mu-1)^5 r_2^2 r_3^3.
\]
We have already discussed the first factor while $\mu=1$ leads to $r_1$ splitting off another factor $\lambda^2$.
We claim that we can also ignore $r_3=0$ for our purposes.
This can be read off directly from the definition of the discriminant:
here there would be a triple root instead of two double roots.
Thus we have to determine when the remaining factor
\begin{eqnarray*}
r_2 &  = & \mu^5+55 \mu^4 \nu+960 \mu^3 \nu^2+5060 \mu^2 \nu^3-1920 \mu \nu^4+176 \nu^5\\
&& \;\;\; -625 \mu^2 \nu^2-7500 \mu \nu^3+2500 \nu^4+3125 \nu^3
\end{eqnarray*}
 vanishes. If char$(k)=5$, then $r_2=(\mu+\nu)^5$,
 and $\nu=-\mu$ subsequently implies $\lambda = \mu-1$ and ramification at $t=3$.
 But there the fiber has type $II$, so the involution $\tau$ will not be fixed point free on $X$.
 
 On the other hand, if char$(k)\neq 5$, then $r_2$ has degree $2$ in any homogenizing variable.
 That is, $r_2$ defines a hyperelliptic curve $C$, with branch points affinely given by $3125(4\mu-1)(\mu+6)^4$.
It follows that the curve $C$ is birational to $\PP^1$ over the prime field,
and parametrizing $C$ gives the data in Table \ref{T2}.

%
%
%

\subsection{}
\label{ss:c}

We emphasize that Theorem \ref{thm1} implies Theorem \ref{thm}
except for  part \textit{(ii)} on the number of moduli components.
This will be treated in the next section.

\section{Component analysis}
\label{s:comp}

The families of Enriques surfaces from Tables \ref{T2'} and \ref{T2},
and the way they were derived,
give us a clear picture of the moduli components:
they are determined by the possible fiber configurations, including multiplicities,
which are compatible with the data in Table \ref{T1}.
Once this classification is achieved (as recorded in Theorem \ref{thm1}), it only remains to rule out potential symmetries.

\begin{Proposition}
\label{prop:number}
If a root type $R$ admits more than one moduli components,
then $R$ can be found in Table \ref{T3}.
Moreover, all but one components support
some proper overlattice $\hat R$ of $R$.
\end{Proposition}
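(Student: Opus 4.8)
The plan is to lean on Theorem~\ref{thm1}: every Enriques surface supporting $R$ occurs in the families of Tables~\ref{T2'} and~\ref{T2}, and by the base change construction of Sections~\ref{s:bc}--\ref{s:fam} each moduli component is pinned down by a single fiber configuration (with multiplicities) for $|2E|$ compatible with the isotropic data $E$, $R_0$ of Table~\ref{T1}. First I would recast ``counting components'' as ``counting admissible configurations up to the symmetries of the construction,'' so that the number of components is bounded above by the number of such configurations. By Criterion~\ref{crit-e} and Proposition~\ref{prop:curves}, for most root types the configuration $\bigoplus_v\tilde R_v$ is the only one compatible with the smooth rational bisection $B$; the residual freedom is to impose ramification (multiplicity two) at one of the multiplicative fibers, which is possible in a fixed-point-free manner only for finitely many $R$. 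Running through the $31$ cases exactly as in Section~\ref{s:fam}, and discarding configurations that force $\tau$ to acquire a fixed point (for instance because ramification would land on an additive fiber, as in the characteristic~$3$ degenerations of \ref{ss:3'}), isolates precisely those $R$ admitting a second or third admissible configuration. Collecting these in Table~\ref{T3} proves the first assertion.

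For the second assertion I would analyse the non-generic configurations via Mordell--Weil lattices. Whenever ramification is imposed at a multiplicative fiber $I_n$, that fiber becomes $I_{2n}$ on the K3 cover $X$ and the section $P$ is forced onto the component opposite $O$; the height formula of \cite{ShMW} then drops $h(P)$, in the configurations at hand all the way to $h(P)=0$, so that $P$ becomes a torsion section, exactly as in the prototype computation of \ref{ss:2nd} where $P=(0,0)$ is $2$-torsion on $X_{6321}$. The presence of this torsion section enlarges the lattice of smooth rational curves realised on the generic member: it now supports a proper even overlattice $\hat R\supsetneq R$ of length one. Being a rank $9$ root type carried by an Enriques surface, $\hat R$ is again one of the $31$ types of \ref{ss:31}; in fact $\hat R$ is already recorded by the ``also~\#$\cdots$'' cross-references in Table~\ref{T2}, and the non-generic component of $R$ coincides with the primary family of $\hat R$.

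It then remains to confirm that the configurations I have counted yield genuinely distinct components, i.e.\ to rule out the residual symmetries. Here the supported root lattice is the separating invariant: the generic component supports exactly $R$ (with no enlargement, since the extra classes of $R'$ are neither effective nor anti-effective by Lemma~\ref{lem:no}), whereas each remaining component supports the strictly larger $\hat R$. Since the lattice of smooth rational curves on the generic member is a deformation invariant, the generic component is separated from all others, and two components carrying non-isomorphic $\hat R$ cannot coincide. This simultaneously delivers the exact component count and the statement that all but the generic component support a proper overlattice.

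The main obstacle I anticipate is not a single computation but the exhaustive bookkeeping underlying the first paragraph: for the root types outside Table~\ref{T3} one must verify that no admissible extra configuration survives, which means showing for each candidate ramification either that it forces $\tau$ to have a fixed point (landing on an additive fiber) or that it merely reproduces the primary family under a M\"obius symmetry of the base. The delicate point is separating these two failure modes cleanly from the genuine additional components, since a naive count of fiber configurations overestimates the number of moduli components, and only the supported-lattice invariant of the third paragraph brings the upper and lower bounds into agreement.
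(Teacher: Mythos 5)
Your first two paragraphs follow the paper's first step (deriving the configurations behind Table \ref{T3} "along the lines of the previous sections"), but the heart of the proposition is your third paragraph, and there the argument is circular. You propose to separate the generic component from the others by the invariant "the generic member supports exactly $R$ and no proper overlattice," justified by Lemma \ref{lem:no}. But Lemma \ref{lem:no} only controls the primitive closure of the \emph{given} embedding $R\hookrightarrow\Num(Y)$ spanned by the nine chosen curves: it says those particular extra classes are not effective or anti-effective. It says nothing about whether the surface carries some \emph{other} configuration of smooth rational curves realizing $\hat R$. Indeed, the generic member of each extra component supports $R$ by nine curves whose saturation classes are non-effective (consistent with Lemma \ref{lem:no}) \emph{and} supports $\hat R$ by a different set of curves, as in \ref{ss:2nd}; nothing in your argument prevents the same from happening on the primary family. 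Worse, deciding that question is equivalent to what you are trying to prove: if the generic member of the primary family supported $\hat R$, it would lie in the one-dimensional $\hat R$-family, i.e.\ the two components would coincide. So the "supported-lattice invariant" cannot serve as the separating tool without an independent computation, and none is supplied (your appeal to deformation invariance of the set of smooth rational curves is also unjustified; such curves appear and disappear in these families).

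The paper's proof supplies exactly this missing computation, and in positive characteristic it is the main content. One checks that the rank-$19$ lattice $L$ generated by fiber components, the sections $O,P$ and torsion sections embeds primitively into $\NS(X)$ of the K3 cover, and one computes the discriminant of $\NS(X)$ of the generic member via \cite[(22)]{SSh}; differing discriminants separate the families. For this to make sense in characteristic $p$ one must first exclude that the whole family is supersingular (generic $\rho(X)=22$), which is Proposition \ref{prop:ord}: the general member is ordinary. Its proof needs a $p$-length/Artin-invariant argument for $p>3$ and, in characteristic $3$, explicit ordinary members obtained from singular K3 surfaces over $\Q$ via Shioda--Inose structures and CM elliptic curves. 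None of this appears in your proposal, and without it no lattice-theoretic invariant of the generic member -- yours included -- can be computed. Two smaller inaccuracies: the extra components are not always obtained by imposing ramification at a multiplicative fiber of the same Jacobian (e.g.\ for $A_5+A_3+A_1$ and $D_6+A_3$ the second configuration has fibers $I_1^*+I_4$ resp.\ $I_4^*$, hence a different extremal rational elliptic surface as Jacobian), and the case $3A_3$ yields four a priori configurations of which two must be shown to give the \emph{same} family, a coincidence your counting scheme would also have to detect.
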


The following table lists the root types admitting more than one component
together with the configurations of reducible fibers (with multiplicities) and the overlattices $\hat R$
supported on the extra families.
Note that in characteristic $3$
the starred root type $R=A_5+2A_2$ has only one component
since the first fiber configuration degenerates
(as studied in \ref{ss}).

\begin{table}[ht!]
$$
\begin{array}{|cccc|cc|}
\hline
 & R & R_0& \text{fibers} & \hat R & \text{fibers}\\
\hline
a) &A_8+A_1 & A_7+A_1 & I_8, I_2 & E_8+A_1 & 2I_8, I_2\\
b) &A_7+2A_1 & 2A_3+2A_1 & I_4, I_4, I_2, I_2 & E_7+2A_1 & I_2^*, 2I_2, I_2\\
&&&& E_8+A_1 & III^*, 2I_2\\
c) & A_7 + A_2 & A_5+A_2+A_1 & I_6,I_3,I_2 & E_7 + A_2 & 2I_6, I_3, I_2\\
d) & A_5+A_3+A_1 & 2A_3+2A_1 & I_4,I_4,I_2,I_2 & E_6+A_3 & I_1^*, I_4\\
e) & A_5+2A_2 \; (*) & A_5+A_2+A_1 & I_6,I_3,I_2 & E_7+A_2 & 2I_6,I_3,I_2\\
f) & 3A_3 & 2A_3+2A_1 & I_4, I_4, I_2, I_2 & D_6+A_3 & 2I_4, I_4, I_2, I_2\\
&&&& D_9 & I_1^*, 2I_4\\
g) &D_8+A_1 & A_7+A_1 & I_8, I_2 & E_8+A_1 & 2I_8, I_2
\\
h) & D_6+A_3 & D_6+2A_1& I_2^*, I_2,I_2 & D_9 & I_4^*
\\
i) & D_4+A_3+2A_1 & 2A_3+2A_1& I_4,I_4,I_2,I_2 
& D_7+2A_1 & 2I_4,I_4,I_2,I_2\\
\hline
\end{array}
$$
\caption{Root types with more than one moduli components}
\label{T3}
\end{table}

\subsection{Proof of Proposition \ref{prop:number}}

Deriving the families underlying Table \ref{T3} is an easy exercise
along the lines of the previous sections. 
We point out one non-trivial step here: the root type $3A_3$ in fact leads
to four fiber configurations (including multiplicities), supported on one-dimensional families,
but then two turn out to be equivalent (as checked in \cite[\S 10.2]{S-Q-hom}).

It remains to verify that the families are indeed distinct (as opposed to the symmetry alluded to above).
Over fields of characteristic zero, this can be achieved as follows.
The smooth rational curves on the Enriques surface $Y$ equip the K3 cover $X$ with a rank 19-sublattice
\[
L\subset\NS(X)
\]
generated by fiber components, the two sections $O, P$ mapping to the smooth rational bisection $B$ and torsion sections.
By the moduli theory of complex K3 surfaces, it follows that generically $\rho(\mX)=19$.
Upon checking that $L$ embeds primitively into $\NS(\mX)$ (which amounts to verifying that the section mapping to $B$ does 
not yield a divisible point in $\MW(\mX)$), we can thus compute the discriminant (of $\NS(\mX)$) of the generic member of $\mX$
based on the formula from \cite[(22)]{SSh}.
As soon as these discriminants differ, the families have to be distinct.

\subsection{}
In positive characteristic, it is still easy to check that $L$ embeds primitively into $\NS(X)$,
but there is a little complication
since we could have $\rho(\mX)=22$,
i.e. the whole family of K3 surfaces could be supersingular
(as happens for the K3 surfaces with maximal root types from Theorem \ref{thm:K3}).
We resolve this problem by the following result.

\begin{Proposition}
\label{prop:ord}
For each of the above families, the general member (over $\bar\F_p$) is an ordinary K3 surface.
\end{Proposition}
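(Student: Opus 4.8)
The plan is to reduce the ordinariness of the general member to the non-vanishing of a single Hasse--Witt invariant attached to the explicit elliptic fibration, and then to verify that non-vanishing on one member of each family. Recall that ordinariness --- the condition that the formal Brauer group $\widehat{\Br}(X)$ have height one, equivalently that the $p$-linear Frobenius act bijectively on the one-dimensional space $H^2(X,\mathcal{O}_X)$ --- is an \emph{open} condition in any flat family of K3 surfaces, since the locus where the height jumps is closed. By Theorem \ref{thm1} each family has irreducible rational base defined over the prime field, so it suffices to exhibit a single ordinary member in each family; ordinariness of the general member follows. This is exactly what the discriminant comparison in the preceding subsection needs: an ordinary K3 is never supersingular, so the general member satisfies $\rho(X)\le 20$, and combined with the rank-$19$ sublattice $L\hookrightarrow\NS(X)$ this forces $\rho(X)\in\{19,20\}$, hence $\rho(X)=19$ generically for a one-dimensional family, placing us back in the situation of characteristic zero.

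Next I would translate ordinariness into a concrete polynomial condition using the explicit fibration. Each $X$ carries the elliptic fibration $\pi^*f$ of Section \ref{s:bc}, given after the quadratic base change by the twisted Weierstrass equation \eqref{eq:X'}; consequently $H^2(X,\mathcal{O}_X)\cong H^1(\PP^1,R^1\pi_*\mathcal{O}_X)$ is one-dimensional and the Frobenius action on it is computed by the relative Hasse--Witt operation. Writing the fibration as $y^2=F(x,t)$ with $F$ cubic in $x$, the fibral Hasse invariant is the coefficient $h(t)$ of $x^{p-1}$ in $F(x,t)^{(p-1)/2}$, a section of $\mathcal{L}^{\otimes(p-1)}$ on the base (where $\mathcal{L}$ is the fundamental line bundle, of degree $2$); the induced $p$-linear endomorphism of $H^1(\PP^1,R^1\pi_*\mathcal{O}_X)$ is then given by a single, specified coefficient of $h(t)$ --- the one pairing non-trivially against the generator of $H^1(\PP^1,\mathcal{O}(-2))$. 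Thus $X$ is ordinary precisely when that coefficient, a polynomial in the family parameters and the data of Table \ref{T2}, is non-zero.

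It then remains, for each family, to produce one parameter value at which this polynomial does not vanish. The cleanest route is to specialize to a member with transparent arithmetic --- for instance one where the twisted fibration becomes isotrivial or degenerates to a Kummer-type surface assembled from ordinary elliptic curves, so that the Hasse invariant factors through the fibral Hasse invariants of those curves and non-vanishing is manifest. Since the families are defined over the prime field, once a single non-vanishing specialization is located, openness finishes the argument. The degenerate entries in characteristics $3$ and $5$ recorded in \ref{ss:3'}--\ref{ss:5} (the starred and daggered rows of Table \ref{T2}) must be handled separately, using the correspondingly modified fibers of Table \ref{T0}.

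The main obstacle is genuinely computational and uniform in $p$: the Hasse--Witt scalar must be shown non-zero for all relevant characteristics at once, so one cannot merely evaluate at a fixed numerical prime. The safe strategy is to isolate, for each family, a geometric specialization whose ordinariness persists for all sufficiently large $p$ --- a product or Kummer degeneration controlled by elliptic curves of non-zero $j$-invariant --- and then dispatch the finitely many small primes by hand. Packaging this so that the $p=3,5$ families are simultaneously covered is where the real care lies. The lifting viewpoint, via \cite{Deligne} (the characteristic-zero family has $\rho=19$) and Tate for elliptic K3 surfaces \cite{ASD}, can serve as a consistency check, but it does \emph{not} by itself yield ordinary reduction, since a fixed K3 need not have an abundance of ordinary primes; hence the explicit Hasse--Witt computation cannot be circumvented.
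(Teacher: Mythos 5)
Your reduction is fine as far as it goes: height is upper semicontinuous, so ordinariness is open, the bases are irreducible, and therefore one ordinary member per family would suffice; combined with Artin's inequality $\rho\leq 22-2h$ and the rank-$19$ lattice $L$, non-ordinariness of the generic member indeed forces supersingularity. But from that point on your proposal is a plan, not a proof. The entire burden is shifted to the Hasse--Witt coefficient computation, and you concede yourself that making it uniform in $p$ (and covering the degenerate rows in characteristics $3$ and $5$) ``is where the real care lies'' --- that is precisely the step that is missing. The suggested specializations are also not substantiated: you would need to show that each of the one-dimensional families in Table \ref{T3} actually contains an isotrivial or Kummer-type member built from elliptic curves whose ordinariness you can control, and no such members are identified. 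As written, no family is ever shown to contain an ordinary surface, so the proposition is not established.

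The gap is all the more visible because the paper shows the computation you declare unavoidable is in fact unnecessary. Having forced the generic member to be supersingular, it invokes Ogus' result that Artin invariant $\sigma=1$ is a single point in moduli, so $\NS(X)$ is $p$-elementary of $p$-length $2\sigma\geq 4$; since
\[
4\leq \text{$p$-length}(\NS(X)) \leq \text{$p$-length}(L)+\mathrm{rank}(L^\perp) = \text{$p$-length}(L)+3,
\]
and $L$ has $p$-length zero for every $p>3$ (and for most families when $p=3$), supersingularity is ruled out uniformly in $p$ by pure lattice theory --- no member need be exhibited at all. Only a handful of families in characteristic $3$ remain, and there the paper does exhibit explicit ordinary members, but via singular K3 surfaces over $\Q$ with $\rho=20$: the Shioda--Inose structure reduces ordinariness mod $3$ to ordinariness of CM elliptic curves, which holds because the relevant discriminant $d$ is a square modulo $3$. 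This also refutes your closing assertion that the lifting/CM viewpoint ``does not by itself yield ordinary reduction'': for CM surfaces one controls exactly at which primes the reduction is ordinary, and that is how the residual cases are dispatched.
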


\begin{proof}
Assume that the general member $X$ over $\bar\F_p$ is not ordinary,
i.e.~$X$ has height $h\geq 2$ including possible $\infty$ (the supersingular case).
If $h<\infty$, then
\[
\rho(X) \leq 22 - 2h \leq 18
\]
which is impossible since $L$ has already rank $19$. Hence $h=\infty$, and $X$ is supersingular.
Denote the Artin-invariant of $X$ by $\sigma$
such that
\[
\NS(X)^\vee /\NS(X) \cong (\Z/p\Z)^{2\sigma}.
\]
More precisely, $\NS(X)$ is a $p$-elementary lattice of length at least $4$
(since the supersingular K3 surface with Artin invariant $\sigma=1$ is just a point in moduli by \cite{Ogus}).
We continue to argue similar to the proof of  Proposition \ref{lem:no-add}.
We have
\[
4\leq 2\sigma = \text{$p$-length}(\NS(X)) \leq \text{$p$-length}(L)+\underbrace{\text{$p$-length}(L^\perp)}_{\leq\,\text{rank}(L^\perp)=3}.
\]
We now have to distinguish whether $p=3$ or $p>3$.
In the latter case, $L$ always has $p$-length zero, so we obtain a contradiction.
The same holds for $b)$ and $f)-i)$ in characteristic $3$ including the second family for $a)$.
For the remaining families,
we proceed by exhibiting an explicit ordinary member of the family.
Note that Proposition \ref{prop:ord} genuinely concerns K3 surfaces,
so we are free to consider those surfaces where the Enriques involution degenerates
(as will be quite convenient).
In fact, we shall specialize to singular K3 surfaces over $\Q$ (i.e.~with $\rho=20$) and then switch to the smooth reduction mod $3$:
\item[$a)$]
At $\mu=-\frac 12$, the $I_3$ fiber in the first family becomes ramified.
Thus 
\[
h(P)=4-2\cdot \frac 56 - \frac 32 = \frac 56,
\]
and we obtain $\rho(X)=20$, $d=$ discr$(\NS(X))=-20$
by \cite[(11.22)]{SSh}.
\item[$c)$]
The first family has an additional section $Q$ at $\mu=-2$,
namely $Q$ is also induced from a section $Q'$ on $X'$
with $x(Q')=(-1+1/\sqrt 2)t+1$ and 
\[
h(Q)=4-2\cdot \frac 56 - 2\cdot \frac 23 = 1.
\]
Since the height of $P$ is unchanged,
\[
h(P)=4-2\cdot \frac 56 - 2\cdot \frac 12 = \frac 43,
\]
it remains to compute the pairing $\langle P,Q\rangle = 2\langle P',Q'\rangle$.
Since $P', Q'$ are disjoint and meet one $I_0^*$ fiber at the same non-identity component and the other at different ones,
while the $I_6$ fiber is meet at the same fiber component 
(for suitable sign choices in the $y$-coordinate),
the latter gives 
\[
\langle P',Q'\rangle = 2 - 1 - \frac 12 - \frac 56 = - \frac 13.
\]
For the Mordell-Weil lattice on $X$, we thus obtain
\[
\MWL(X) = 
\begin{pmatrix} \frac 43 & -\frac 23\\
-\frac 23 & 1
\end{pmatrix}
\]
of determinant $-8/3$. Hence $\rho(X)=20$ and $d=-32$, again by \cite[(11.22)]{SSh}.
\\
The second family attains a ramified $I_3$ fiber at $\lambda=0$
such that $\rho=20, d=-8$.
\item[$d)$]
The first family attains two multiple fibers at $\mu=0$ while $P$ becomes two-torsion.
This yields $\rho=20, d=-32$.\\
The second family attains a ramified $I_1$ fiber at $\mu=-\frac 12$
which $P$ meets non-trivially. Hence 
\[
h(P)=4-2\cdot \frac 54 - \frac 12 = 1,
\]
so that again $\rho=20, d=-32$.
\item[$e)$]
The second family agrees with that of $c)$.

In either case, it is easily verified that the reduction modulo $3$ is good.
The proof of Proposition \ref{prop:ord} will be completed 
by verifying the following statement:

\begin{claim}
For each of the above K3 surfaces over $\Q$, one has $\rho(X\otimes\bar \F_3)=20$.
\end{claim}
\textit{Proof.}
To see the claim, one can use a kind of complementary statement to \cite[Prop.~4.1]{S-dyn}.
Here we can argue with the Shioda--Inose structure following \cite{SI}
which relates $X$ to a product of two isogenous elliptic curves $E, E'$ with CM in $\mathcal O_K$
or an order of conductor $2$ for $K=\Q(\sqrt d)$.
Since $d$ is a square modulo $3$, the elliptic curves are ordinary,
and 
$\rho((E\times E')\otimes\bar\F_3)=4$ which yields the claim.
Alternatively, one can argue purely on the level of lattices that $\NS(X)$ does not admit an embedding
into the N\'eron--Severi lattice of a supersingular K3 surface 
for any $p$ such that $d$ is a square modulo $p$.
\end{proof}

\subsection{}
It follows from Proposition \ref{prop:ord} that the generic member of the family $\mX$ has Picard number $19$
and the same discriminant as in characteristic zero (after checking that there is no extra divisibility in $\MW(\mX)$ 
in characteristic $p$). In consequence, the given families are all distinct, and Proposition \ref{prop:number} is proven.
\qed

\subsection{Conclusion}

This proves  part \textit{(ii)} of Theorem \ref{thm}.
In view of \ref{ss:c}, the proof of Theorem \ref{thm} is thus complete.
\qed

\subsection*{Acknowledgements}

Thanks to Toshiyuki Katsura for  helpful comments.


\begin{thebibliography}{99}

%

\bibitem{ASD} Artin, M., Swinnerton-Dyer, P.: \emph{The Shafarevich-Tate conjecture for pencils of elliptic curves on $K3$ surfaces}, Invent.~Math.~{\bf 20} (1973), 249--266.

%
%


\bibitem{Beauville} Beauville, A.:
\emph{Les familles stables de courbes elliptiques sur $\PP^1$
admettant quatre fibres singuli\'eres},
C.R.~Acad.~Sci.~Paris {\bf 294} (1982), 657--660.





\bibitem{BM1} Bombieri, E., Mumford, D.:
\emph{Enriques' classification of surfaces in char.~$p$. II},
in: Baily, W.~L.~Jr., Shioda, T.~(eds.),
 \emph{Complex analysis and algebraic geometry},
 Iwanami Shoten, Tokyo (1977) 23--42.


\bibitem{BM2} Bombieri, E., Mumford, D.:
\emph{Enriques' classification of surfaces in char.~$p$. III},
Invent.~Math.~{\bf 35} (1976), 197--232.




%
%

\bibitem{CS}
Cartwright, D., Steger, T.:
\emph{Enumeration of the 50 fake projective planes}, 
Comptes Rendus Math. {\bf 348} (2010), 11--13.

%
%
%

\bibitem{Deligne}
Deligne, P.:
\emph{Relevement de surfaces K3 en charact\'eristique nulle},
Lect.~Notes Math.~{\bf 868} (1981), 58--71.


%
%
%
%
%
%
%
%
%
%


\bibitem{HS}  Hulek, K., Sch\"utt, M.: \emph{Enriques surfaces and Jacobian elliptic surfaces},
Math. Z. {\bf 268} (2011), 1025--1056.


%
%
%

\bibitem{HK}
Hwang, D., Keum, JH:
\emph{The maximum number of singular points on rational homology projective planes}, J. Algebraic
Geom. 
{\bf 20} (2010), 495--523.


\bibitem{HKO}
Hwang, D, Keum, JH, Ohashi, H.:
\emph{Gorenstein $\Q$-homology projective planes},
Science China Math. {\bf 58} (2015), 501--512.

\bibitem{Illusie}
Illusie, L.:
\emph{Complexe de de RhamÐWitt et cohomologie cristalline}, Ann. Sci. \'Ecole Norm.
Sup. {\bf 12} (1979), 501--661.

%
%

%


\bibitem{Kondo-Enriques} Kond\=o, S.: \emph{Enriques surfaces with finite automorphism groups.}, Japan. J. Math. (N.S.) {\bf 12} (1986), 191--282. 


\bibitem{Kondo-Aut}
S.~Kond\=o, 
\emph{Automorphisms of algebraic K3 surfaces which act trivially
on Picard groups}, 
J.~Math.~Soc.~Japan, {\bf 44}, No.~1 (1992), 75--98.

\bibitem{Lang1}
Lang, W.~E.:
\emph{Extremal rational elliptic surfaces in characteristic p. I. Beauville surfaces},
Math. Z. {\bf 207} (1991), 429--437. 

\bibitem{Lang2}
Lang, W.~E.:
\emph{Extremal rational elliptic surfaces in characteristic p. II. Surfaces with three or fewer singular fibres},
Ark. Mat. {\bf 32} (1994),  423--448. 



\bibitem{Martin}
Martin, G.:
\emph{Enriques surfaces with finite automorphism group in positive characteristic},
preprint (2017), arXiv: 1703.08419.

\bibitem{MP} Miranda, R., Persson, U.:
\emph{On Extremal Rational Elliptic Surfaces},
Math.~ Z.~{\bf 193} (1986), 537--558.

\bibitem{Mumford-surf}
Mumford, D.:
\emph{Enriques' classification of surfaces in char.~$p$. I},
in: Global analysis.
Princeton, University Press 1969.





\bibitem{Nikulin} Nikulin, V.~V.:
\emph{Integral symmetric bilinear forms and some of their applications},
Math.~USSR Izv.~{\bf 14}, No.~1 (1980), 103--167.
%
%
%
%
%
\bibitem{Ogus} Ogus, A.:
\emph{Supersingular K3 crystals},
Journ\'ees de G\'eom\'etrie Alg\'ebrique de Rennes (Rennes 1978),
Vol.~II,  3--86, Ast\'erisque {\bf 64}, Soc.~Math.~France, Paris, 1979.

%

\bibitem{PY}
Prasad, G., Yeung, S.-K.:
\emph{Fake projective planes}, 
Invent. Math. {\bf 168} (2007), 321--370;
addendum:  Invent. Math. {\bf 182} (2010), 213--227.


\bibitem{RS}
Rams, S., Sch\"utt, M.:
\emph{On Enriques surfaces with four cusps},
preprint (2014), arXiv: 1404.3924v2.

%
%
%
%

\bibitem{S-Q-hom} Sch\"utt, M.:
 \emph{Moduli of Gorenstein $\Q$-homology projective planes},
preprint (2015),
arXiv: 1505.04163v2.


\bibitem{S-dyn}
Sch\"utt, M.:
\emph{Dynamics on supersingular K3 surfaces}
Comment.~Math.~Helv.~{\bf 91} (2016), 705--719.

\bibitem{S-2}
Sch\"utt, M.:
\emph{$Q_\ell$-cohomology projective planes and singular Enriques surfaces in characteristic two},
preprint (2017),
arXiv: 1703.10441.

\bibitem{S-nodal}
Sch\"utt, M.:
\emph{Divisibilities among nodal curves},
preprint (2017), arXiv: 1706.00570.




%

%
%
%
%
%
%
%
\bibitem{SSh} 
 Sch\"utt, M., Shioda, T.:
 \emph{Elliptic surfaces},
Algebraic geometry in East Asia - Seoul 2008, 
Advanced Studies in Pure Math.~{\bf 60} (2010), 51-160.  
%
%
%
%
%
 \bibitem{Shimada-rk21} 
 Shimada, I.:  
\emph{Rational double points on supersingular $K3$ surface}, 
Mathematic of computation {\bf 73}, No. 248 (2004), 1989--2017.

%



\bibitem{ShEMS} Shioda, T.:
 \emph{On elliptic modular surfaces},
 J.~Math.~Soc.~Japan {\bf 24} (1972), 20--59.

%
%
\bibitem{ShMW} Shioda, T.: \emph{On the Mordell--Weil lattices}, Comm.~Math.~Univ.~St.~Pauli {\bf 39} (1990), 211--240.
%
%
%
%
\bibitem{SI} Shioda, T., Inose, H.: \emph{On Singular $K3$ Surfaces}, in: W. L. Baily Jr., T. Shioda (eds.), \emph{Complex analysis and algebraic geometry}, Iwanami Shoten, Tokyo (1977), 119--136.
%
%
%
%
%
%


%
%


\bibitem{Ye}
 Ye, Q.:
 \emph{On Gorenstein log del Pezzo surfaces}, 
 Japan J. Math. {\bf 28} (2002), 87--136.


\end{thebibliography}
\end{document}